\numberwithin{equation}{section} 
\newenvironment{pdeq}{ \left\{ \begin{aligned}}{\end{aligned}\right.}
\newcommand{\eqrefsub}[2]{\eqref{#1}\textsubscript{#2}}
\newcommand{\np}[1]{(#1)}
\newcommand{\nb}[1]{[#1]}
\newcommand{\bp}[1]{\big(#1\big)}
\newcommand{\bb}[1]{\big[#1\big]}
\newcommand{\Bp}[1]{\bigg(#1\bigg)}
\newcommand{\Bb}[1]{\bigg[#1\bigg]}
\newcommand{\calp}{{\mathcal P}}
\newcommand{\calt}{{\mathcal T}}
\newcommand{\R}{\mathbb{R}}
\newcommand{\C}{\mathbb{C}}
\newcommand{\Z}{\mathbb{Z}}
\newcommand{\N}{\mathbb{N}}
\DeclareMathOperator{\e}{e}
\newcommand{\linearmaps}{\mathscr{L}}
\DeclareMathOperator{\Div}{div}
\DeclareMathOperator{\supp}{supp}
\DeclareMathOperator{\trace}{Tr}
\newcommand{\mult}{\mathfrak{m}}
\newcommand{\tmult}{\mathfrak{\widetilde{m}}}
\newcommand{\ceq}{\coloneqq}
\newcommand{\ra}{\rightarrow}
\newcommand{\set}[1]{\ensuremath{\{#1\}}}
\newcommand{\setc}[2]{\ensuremath{\{#1\ \lvert\ #2\}}}
\newcommand{\setcl}[2]{\ensuremath{\bigl\{#1\ \lvert\ #2\bigr\}}}
\newcommand{\setcL}[2]{\ensuremath{\biggl\{#1\ \lvert\ #2\biggr\}}}
\newcommand{\closure}[2]{\overline{#1}^{#2}}
\newcommand{\proj}{\calp}
\newcommand{\projcompl}{\calp_\bot}
\newcommand{\quotientmap}{\pi}
\newcommand{\grp}{{\torus\times\Rn}}
\newcommand{\grpx}{{\torus\times\Rn}}
\newcommand{\grpxx}{{\torus\times\ws}}
\newcommand{\Zgrp}{{\frac{2\pi}{\per}\Z}}
\newcommand{\dualgrp}{{\frac{2\pi}{\per}\Z\times\Rn}}
\newcommand{\grpH}{H}
\newcommand{\grpG}{G}
\newcommand{\dualgrpH}{\widehat{H}}
\newcommand{\dualgrpG}{\widehat{G}}
\newcommand{\torus}{{\mathbb T}}
\newcommand{\halfspace}{\R^n_+}
\newcommand{\wholespace}{\R^{n-1}}
\newcommand{\hs}{{\halfspace}}
\newcommand{\ws}{\wholespace}
\newcommand{\Rn}{{\R^n}}
\newcommand{\grad}{\nabla}
\newcommand{\px}{\partial_x}
\newcommand{\pt}{\partial_t}
\newcommand{\dx}{{\mathrm d}x}
\newcommand{\dg}{{\mathrm d}g}
\newcommand{\dt}{{\mathrm d}t}
\newcommand{\dxi}{{\mathrm d}\xi}
\newcommand{\SR}{\mathscr{S}}
\newcommand{\SRh}{Z}
\newcommand{\TDR}{\mathscr{S^\prime}}
\newcommand{\TDRh}{\SRh^\prime}
\newcommand{\ft}[1]{\widehat{#1}}
\newcommand{\FT}{\mathscr{F}}
\newcommand{\iFT}{\mathscr{F}^{-1}}
\newcommand{\mmultiplier}{m}
\newcommand{\Mmultiplier}{M}
\newcommand{\MmultiplierRestriction}{M_{|\Zgrp\times\ws}}
\newcommand{\MmultiplierNr}[1]{M_{#1}}
\newcommand{\multrestriction}[2]{#1_{|#2}}
\newcommand{\norm}[1]{\lVert#1\rVert}
\newcommand{\normL}[1]{\Bigl\lVert#1\Bigr\rVert}
\newcommand{\snorm}[1]{{\lvert #1 \rvert}}
\newcommand{\opnorm}[1]{{\vert\kern-0.25ex\vert\kern-0.25ex\vert #1 \vert\kern-0.25ex\vert\kern-0.25ex\vert}}
\newcommand{\parnorm}[2]{|#1,#2|}
\newcommand{\WSR}[2]{W^{#1,#2}}
\newcommand{\WSRD}[2]{\dot{W}^{#1,#2}} 
\newcommand{\WSRN}[2]{W^{#1,#2}_0} 
\newcommand{\HSR}[2]{H^{#1,#2}}
\newcommand{\HSRD}[2]{\dot{H}^{#1,#2}}
\newcommand{\CR}[1]{C^{#1}}  
\newcommand{\LR}[1]{L^{#1}}
\newcommand{\CRi}{\CR \infty}
\newcommand{\CRci}{\CR \infty_0}
\newcommand{\BSRcompl}[2]{B^{#1}_{#2,\bot}}
\newcommand{\ABPSRcompl}[2]{H^{#1}_{#2,\bot}}
\newcommand{\LRper}[1]{L^{#1}_{\mathrm{per}}}
\newcommand{\WSRper}[2]{W^{#1,#2}_{\mathrm{per}}} 
\newcommand{\CRiper}{\CR{\infty}_{\mathrm{per}}}
\newcommand{\vvel}{v}
\newcommand{\vpres}{\Pi}
\newcommand{\uvel}{u}
\newcommand{\upres}{p}
\newcommand{\tuvel}{\tilde{u}}
\newcommand{\tupres}{\tilde{p}}
\newcommand{\adjvel}{\psi}
\newcommand{\adjpres}{\eta}
\newcommand{\wvel}{w}
\newcommand{\wpres}{\pi}
\newcommand{\twvel}{\widetilde{w}}
\newcommand{\Uvel}{U}
\newcommand{\Upres}{\mathfrak{P}}
\newcommand{\uvelp}{v}
\newcommand{\uveln}{w}
\newcommand{\tin}{\text{in }}
\newcommand{\tif}{\text{if }}
\newcommand{\ton}{\text{on }}
\newcommand{\half}{\frac{1}{2}}
\renewcommand{\epsilon}{\varepsilon}
\newcommand{\tay}{\calt}
\newcommand{\per}{\tay}
\newcommand{\iper}{\frac{1}{\tay}}
\newcommand{\dualrho}{\hat{\rho}}
\newcommand{\pex}{\parnorm{\eta}{\xi}}
\newcommand{\f}{f}
\newcommand{\h}{h}
\newcommand{\rhsh}{H}
\newcommand{\rhsG}{G}
\newcommand{\rhshp}{\rhsh^\prime}
\newcommand{\rhshn}{\rhsh_n}
\newcommand{\g}{g}
\newcommand{\qdf}{q_0}
\newcommand{\qdfone}{q_1}
\newcommand{\qdftwo}{q_2}
\newcommand{\qoprgood}{\mathscr G}
\newcommand{\qoprbad}{\mathscr B}
\newcommand{\pu}{\phi}
\newcommand{\hf}{h}
\newcommand{\newCCtr}[2][d]{
\newcounter{#2}\setcounter{#2}{0}
\expandafter\xdef\csname kyedtheconst#2\endcsname{#1}
}
\newcommand{\Cc}[2][nolabel]{
\stepcounter{#2}
\expandafter\ensuremath{\csname kyedtheconst#2\endcsname_{\arabic{#2}}}
\ifthenelse{\equal{#1}{nolabel}}
{}
{\expandafter\xdef\csname kyedconst#1\endcsname
{\expandafter\ensuremath{\csname kyedtheconst#2\endcsname_{\arabic{#2}}}}}
}
\newcommand{\Ccn}[2][nolabel]{
\expandafter\ensuremath{\csname kyedtheconst#2\endcsname}
\ifthenelse{\equal{#1}{nolabel}}
{}
{\expandafter\xdef\csname kyedconst#1\endcsname
{\expandafter\ensuremath{\csname kyedtheconst#2\endcsname}}}
}
\newcommand{\CcSetCtr}[2]{
\setcounter{#1}{#2}
}
\newcommand{\Cclast}[1]{
\expandafter\ensuremath{\csname kyedtheconst#1\endcsname_{\arabic{#1}}}
}
\newcommand{\Ccllast}[1]{
\addtocounter{#1}{-1}
\expandafter\ensuremath{\csname kyedtheconst#1\endcsname_{\arabic{#1}}}
\addtocounter{#1}{1}
}
\newcommand{\const}[1]{
\expandafter{\ifcsname kyedconst#1\endcsname
  \csname kyedconst#1\endcsname
\else
  \errmessage{Undefined Kyedconstant #1.}%
\fi}
}
\newcommand{\Ccnlast}[1]{
\Ccn{#1}
}
\theoremstyle{plain}
\newtheorem{thm}{Theorem}[section]
\newtheorem{defn}[thm]{Definition}
\newtheorem{lem}[thm]{Lemma}
\newtheorem{prop}[thm]{Proposition}
\theoremstyle{remark}
\newtheorem{rem}[thm]{Remark}
\begin{document}

%%%%%%%%%%%%%%%%%%%%%%%%%%%%%%%%%%%%%%%%%%%%%%%%%%%%%%%%%%%%%%
%%          Title, author, date, abstract, etc.             %%
%%%%%%%%%%%%%%%%%%%%%%%%%%%%%%%%%%%%%%%%%%%%%%%%%%%%%%%%%%%%%%
\title{Time-periodic Stokes equations with inhomogeneous Dirichlet boundary conditions in a half-space}

\author{
Aday Celik\\ 
Fachbereich Mathematik\\
Technische Universit\"at Darmstadt\\
Schlossgartenstr. 7, 64289 Darmstadt, Germany\\
Email: {\texttt{celik@mathematik.tu-darmstadt.de}}
\and
Mads Kyed\\ 
Fachbereich Mathematik\\
Technische Universit\"at Darmstadt\\
Schlossgartenstr. 7, 64289 Darmstadt, Germany\\
Email: {\texttt{kyed@mathematik.tu-darmstadt.de}}
}

\date{\today}
\maketitle

\begin{abstract}
The time-periodic Stokes problem in a half-space with fully inhomogeneous right-hand side is investigated. 
Maximal regularity in a time-periodic Lp setting is established. A method based on Fourier-multipliers
is employed that leads to a decomposition of the solution into a steady-state and a purely oscillatory part in
order to identify the optimal function spaces.
\end{abstract}

\noindent\textbf{MSC2010:} Primary 35L05, 35B10, 35B34.\\
\noindent\textbf{Keywords:} Stokes problem, time-periodic solutions, maximal regularity.

%%%%%%%%%%%%%%%%%%%%%%%%%%%%%%%%%%%%%%%%%%%%%%%%%%%%%%%%%%%%%%%%%%%%%%%%%%%%%%%%%%%%%%%%%%%%%%%%%%%%%
%%          Global Constant Counters
%%
%%    \newCCtr[C]{const}	defines a constant counter named ``const'' with the symbol ``C''   
%%    \Cc{const}		inserts the symbol (with number) of constant counter ``const''     
%%    \Cc[ref]{const}		same as \Cc{const} but adds the reference ``ref''
%%    \const{ref}		inserts the numbered constant ``ref''
%%
%%%%%%%%%%%%%%%%%%%%%%%%%%%%%%%%%%%%%%%%%%%%%%%%%%%%%%%%%%%%%%%%%%%%%%%%%%%%%%%%%%%%%%%%%%%%%%%%%%%%%
\newCCtr[C]{C}
\newCCtr[M]{M}
\newCCtr[\epsilon]{eps}
\CcSetCtr{eps}{-1}
\newCCtr[c]{c}
\let\oldproof\proof
\def\proof{\CcSetCtr{c}{-1}\oldproof}

%%%%%%%%%%%%%%%%%%%%%%%%%%%%%%%%%%%%%%%%%%%%%%%%%%%%%%%%%%%%%%
%%          Main document                                   %%
%%%%%%%%%%%%%%%%%%%%%%%%%%%%%%%%%%%%%%%%%%%%%%%%%%%%%%%%%%%%%%
\section{Introduction}
We investigate the $\per$-time-periodic Stokes problem
\begin{align}\label{SH}
\begin{pdeq}
\pt\uvel - \Delta\uvel + \grad \upres &= \f && \tin\R\times\halfspace, \\
\Div\uvel &= \g && \tin\R\times\halfspace, \\
\uvel &= \h && \ton\R\times\partial\halfspace, \\
\uvel(\per+t, \cdot) &= \uvel(t, \cdot),
\end{pdeq}
\end{align}
in a half-space $\halfspace$ of dimension $n\geq 2$. Here, $\uvel\colon\R\times\halfspace\to\Rn$ denotes the velocity field and $\upres\colon\R\times\halfspace\to\R$ the pressure term. As customary in the formulation of time-periodic problems, the time-axis is taken to be the whole of $\R$. Variables in the time-space domain $\R\times\halfspace$ are denoted by $(t,x)$.
The time-period $\per>0$ shall remain fixed. Data  
$\f\colon\R\times\halfspace\to\Rn$, 
$\g\colon\R\times\halfspace\to\R$ and
$\h\colon\R\times\partial\halfspace\to\Rn$ 
that are also $\per$-time-periodic are considered.

The time-periodic Stokes equations play a fundamental role in a wide range of problems in fluid mechanics. Although 
comprehensive $\LR{p}$ estimates of maximal regularity type are available in the whole-space case \cite{KyedMaxReg14}, similar estimates in the more complicated half-space case were only established recently by \textsc{Maekawa} and \textsc{Sauer} \cite{MaekawaSauer17}. The analysis in \cite{MaekawaSauer17}, however, does not include inhomogeneous boundary data $\h\neq 0$. 
In the following, we shall establish maximal regularity $\LR{p}$ estimates that include the case $\h\neq 0$.
Such estimates are crucial in a number of applications. For example, the classical approach to free boundary problems in fluid-structure interaction relies heavily on maximal regularity frameworks that include inhomogeneous boundary data. When time-periodic driving forces are studied in such settings,
time-periodic Stokes equations appear in the linearization.

The nature of the Stokes problem does not allow the treatment of inhomogeneous boundary data by a simple ``lifting'' argument. 
Consequently, an extension of the results in \cite{MaekawaSauer17} to include the case $\h\neq 0$ is by no means trivial.
In the following, we employ a different approach than the reflection type argument used in \cite{MaekawaSauer17}. Instead, we use the Fourier-transform $\FT_{\torus\times\R^{n-1}}$ to reduce \eqref{SH} to an ordinary differential equation in the variable $x_n$. Here, $\torus$ denotes the torus $\R/\per\Z$. The $\LR{p}$ estimates are then established with arguments based on Fourier-multipliers and interpolation techniques. Although the main idea behind this approach is not new, indeed it has been applied successfully by various authors to investigate the initial-value Stokes problem, a number of non-trivial modifications are needed to adapt the arguments to the time-periodic case. Notably, the system \eqref{SH} has to be decomposed into a steady-state part and a so-called purely oscillatory part. Without this decomposition, it seems impossible to establish optimal $\LR{p}$ estimates. Whereas the estimates for the resulting steady-state problem are well-known and can be found in contemporary literature, the estimates for the purely oscillatory part in the following are new. 

It is convenient to formulate $\per$-time-periodic problems in a setting of function spaces where the torus $\torus\coloneqq\R/\per\Z$ is used as a time-axis.
Indeed, via lifting with the quotient map $\quotientmap\colon\R\to\torus$, 
$\per$-time-periodic functions are canonically identified as functions defined on $\torus$ and vice versa.
For such functions, we introduce the simple decomposition
\begin{align}\label{intro_projections}
\proj\uvel(x) \coloneqq \int_\torus \uvel(t,x)\,\dt = \iper\int_0^\per \uvel(t,x)\,\dt,\qquad
\projcompl\uvel(t,x) \coloneqq \uvel(t,x)-\proj\uvel(x)
\end{align} 
into a time-independent part $\proj\uvel$, and a part $\projcompl\uvel$ with vanishing time-average over the period. We shall refer to $\proj\uvel$ as the \emph{steady-state} part, and to $\projcompl\uvel$ as the \emph{purely oscillatory} part of $\uvel$. 

Equipped with the quotient topology, the time-space domain $\torus\times\R^n$ is a locally compact 
abelian group and therefore has a Fourier transform $\FT_{\torus\times\R^n}$ associated to it. Moreover, we may introduce the Schwartz-Bruhat space
$\SR(\torus\times\R^n)$ and its dual space $\TDR(\torus\times\R^n)$ of tempered distributions. 
Consequently, Bessel potential spaces with underlying time-space domain $\torus\times\R^n$ can be defined as subspaces of $\TDR(\torus\times\R^n)$ in a completely standard manner. Sobolev spaces are introduced as Bessel potential spaces with integer exponents, and Sobolev-Slobodecki\u{\i} spaces, \textit{i.e} Sobolev spaces with non-integer exponents, via real interpolation. 
The same scale of function spaces with respect to the half-space $\hs$ is obtained by restriction. 
In a setting of these function spaces (see Section \ref{pre} for the precise definitions and Remark \ref{PerVsTorusFunctionSpacesRemark} below), the main theorem of this article can be formulated as follows:

\begin{thm}[Main Theorem]\label{MainThm}
Let $q\in(1,\infty)$ and $n\geq 2$. For all
\begin{align}
\begin{aligned}\label{MainThm_Data}
&\f\in\LR{q}\bp{\torus; \LR{q}(\halfspace)}^n,\\
&\g\in\LR{q}\bp{\torus; \WSR{1}{q}(\halfspace)}\cap\WSR{1}{q}\bp{\torus; \WSRD{-1}{q}(\halfspace)},\\
&\h\in\WSR{1-\frac{1}{2q}}{q}\bp{\torus;\LR{q}(\ws)}^n\cap\LR{q}\bp{\torus;\WSR{2-\frac{1}{q}}{q}(\ws)}^n 
\end{aligned}
\end{align}
with
\begin{align}\label{MainThm_DataCompCond}
\begin{aligned}
&\h_{n}\in\WSR{1}{q}(\torus; \WSRD{-\frac{1}{q}}{q}(\ws))
\end{aligned}
\end{align}
there is a solution $(\uvel,\upres)$ to \eqref{SH} with 
\begin{align}
\begin{aligned}\label{MainThm_SolReg}
&\proj\uvel\in\WSRD{2}{q}(\hs)^n,\\
&\projcompl\uvel\in\projcompl\WSR{1}{q}\bp{\torus;\LR{q}(\hs)}^n\cap\projcompl\LR{q}\bp{\torus;\WSR{2}{q}(\hs)}^n,\\
&\upres\in\LR{q}\bp{\torus;\WSRD{1}{q}(\hs)},
\end{aligned}
\end{align}
which satisfies
\begin{multline}\label{MainThm_ProjEst}
\norm{\grad^2\proj\uvel}_{\LR{q}(\hs)} + \norm{\grad\proj\upres}_{\LR{q}(\hs)}\\
\leq \Ccn{C}
\bp{\norm{\proj \f}_{\LR{q}(\hs)} + \norm{\proj\g}_{\WSR{1}{q}(\hs)}+\norm{\proj\h}_{\WSR{2-\frac{1}{q}}{q}(\ws)}}
\end{multline}
and 
\begin{align}\label{MainThm_ProjComplEst}
\begin{aligned}
&\norm{\projcompl\uvel}_{\WSR{1}{q}\np{\torus;\LR{q}(\hs)}\cap\LR{q}\np{\torus;\WSR{2}{q}(\hs)}}
+ \norm{\grad\projcompl\upres}_{\LR{q}\np{\torus; \LR{q}(\hs)}} \\
&\qquad \leq \Ccn{C}\, 
\bp{\norm{\projcompl\f}_{\LR{q}\np{\torus;\LR{q}(\halfspace)}}+\norm{\projcompl\g}_{\LR{q}\np{\torus; \WSR{1}{q}(\halfspace)}\cap\WSR{1}{q}\np{\torus; \WSRD{-1}{q}(\halfspace)}}\\
&\qquad\qquad +\norm{\projcompl\h}_{\WSR{1-\frac{1}{2q}}{q}\np{\torus;\LR{q}(\ws)}\cap\LR{q}\np{\torus;\WSR{2-\frac{1}{q}}{q}(\ws)}}
+ \norm{\projcompl\h_n}_{\WSR{1}{q}\np{\torus; \WSRD{-\frac{1}{q}}{q}(\ws)}}\,
}
\end{aligned}
\end{align}
with $\Ccn{C}=\Ccn{C}\np{n,q,\per}$.
If $(\tuvel,\tupres)$ is another solution to \eqref{SH} in the class \eqref{MainThm_SolReg}, then 
$\projcompl\uvel=\projcompl\tuvel$, $\proj\uvel=\proj\tuvel+(a_1x_n,\ldots,a_{n-1}x_{n},0)$ for some vector $a\in\R^{n-1}$, and $\upres=\tupres + d(t)$
for some function $d$ that depends only on time.
\end{thm}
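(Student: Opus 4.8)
The plan is to split \eqref{SH} into a steady-state and a purely oscillatory problem by applying the projections \eqref{intro_projections}, which commute with $\pt$, $\grad$, $\Div$ and $\Delta$. Applying $\proj$ yields the stationary Stokes system
\begin{align*}
-\Delta\proj\uvel + \grad\proj\upres = \proj\f,\qquad \Div\proj\uvel = \proj\g\ \tin\hs,\qquad \proj\uvel = \proj\h\ \ton\partial\hs,
\end{align*}
while $\projcompl$ yields the purely oscillatory time-periodic Stokes problem for $(\projcompl\uvel,\projcompl\upres)$ with data $(\projcompl\f,\projcompl\g,\projcompl\h)$. For the stationary part I would invoke the classical $\LR{q}$ maximal regularity theory for the Stokes system in a half-space with inhomogeneous boundary values, which provides a solution with $\proj\uvel\in\WSRD{2}{q}(\hs)^n$ and $\proj\upres\in\WSRD{1}{q}(\hs)$ obeying \eqref{MainThm_ProjEst}; homogeneous Sobolev spaces are forced here by the unboundedness of $\hs$, and $\proj\g\in\WSRD{-1}{q}(\hs)$ plays the role of the usual compatibility requirement. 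Since this part is not new, only the identification of the solution's degrees of freedom has to be recorded.

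The substance of the theorem is the purely oscillatory part. First I would apply the Fourier transform $\FT_{\torus\times\ws}$ in the tangential variables $(t,x')$, where $x=(x',x_n)$ and $x_n>0$, to turn the system into a family of boundary value problems for a coupled ODE system in $x_n>0$ for the transformed velocity and pressure subject to the transformed divergence constraint, parametrized by $(k,\xi)\in\Zgrp\times\ws$ with $k\neq 0$. Solving this ODE explicitly and discarding the non-decaying modes produces representation formulas expressing $\projcompl\uvel$ and $\grad\projcompl\upres$ as Fourier multiplier operators (in $(t,x')$, with $x_n$ an additional parameter) applied to $\projcompl\f$, $\projcompl\g$, $\projcompl\h$ and their derivatives; the symbols involve $\snorm{\xi}$, the root $\sqrt{\snorm{\xi}^2+ik}$, and the exponentials $\e^{-\snorm{\xi}x_n}$ and $\e^{-\sqrt{\snorm{\xi}^2+ik}\,x_n}$.

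The $\LR{q}$ estimates would then follow from multiplier theory on the locally compact abelian group $\torus\times\ws$: by the transference principle it suffices to check that the symbols, extended suitably to $\R\times\ws$, satisfy a Marcinkiewicz--Mikhlin type condition uniformly in the parameter $x_n>0$, after which one integrates in $x_n$ using the exponential decay, or passes to the trace at $x_n=0$ for the terms coming from $\projcompl\h$. The decisive point, and the reason the decomposition is indispensable, is that $\snorm{k}\geq\tfrac{2\pi}{\per}$ is bounded away from zero on the purely oscillatory part, which keeps these symbols non-degenerate. For the boundary contributions I would use real interpolation to identify $\WSR{1-\frac{1}{2q}}{q}(\torus;\LR{q}(\ws))\cap\LR{q}(\torus;\WSR{2-\frac{1}{q}}{q}(\ws))$ as the sharp anisotropic trace space for $\projcompl\h$, while the normal component produces a term that requires the additional regularity $\projcompl\h_n\in\WSR{1}{q}(\torus;\WSRD{-\frac{1}{q}}{q}(\ws))$ of \eqref{MainThm_DataCompCond}, which is what the divergence constraint imposes at the boundary; the term involving $\pt\projcompl\g$ is absorbed by the hypothesis $\projcompl\g\in\WSR{1}{q}(\torus;\WSRD{-1}{q}(\hs))$. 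Collecting the two estimates and putting $\uvel=\proj\uvel+\projcompl\uvel$, $\upres=\proj\upres+\projcompl\upres$ proves existence together with \eqref{MainThm_ProjEst}--\eqref{MainThm_ProjComplEst}.

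For uniqueness, let $(\twvel,\twpres)\ceq(\uvel-\tuvel,\upres-\tupres)$ be the difference of two solutions in the class \eqref{MainThm_SolReg}; it solves \eqref{SH} with vanishing data. Applying $\projcompl$ and using that the representation formula above is the only solution of the transformed boundary value problem when $k\neq 0$ forces $\projcompl\twvel=0$ and $\grad\projcompl\twpres=0$. Applying $\proj$ reduces matters to the homogeneous stationary Stokes problem in $\hs$ in the class $\WSRD{2}{q}(\hs)^n\times\WSRD{1}{q}(\hs)$, whose solutions are exactly the linear fields $(a_1x_n,\dots,a_{n-1}x_n,0)$ with $a\in\ws$ and spatially constant pressure; hence $\proj\uvel=\proj\tuvel+(a_1x_n,\dots,a_{n-1}x_n,0)$ and $\grad\proj\twpres=0$. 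Together with $\grad\projcompl\twpres=0$ this gives $\grad\twpres=0$, so $\twpres$ depends on $t$ alone, i.e. $\upres=\tupres+d(t)$. The step I expect to be the main obstacle is the verification of the Marcinkiewicz--Mikhlin conditions for the purely oscillatory symbols, and, within it, pinning down the correct anisotropic trace space for $\h$ and unearthing the hidden compatibility condition \eqref{MainThm_DataCompCond} on $\h_n$.
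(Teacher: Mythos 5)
Your overall scheme agrees with the paper: decompose via $\proj$ and $\projcompl$, dispose of the steady-state part with the classical half-space Stokes theory (Galdi), and handle the purely oscillatory part via $\FT_{\torus\times\ws}$, explicit ODE solution formulas in $x_n$, Marcinkiewicz plus the transference principle, and real interpolation in anisotropic (Besov-type) spaces to identify the trace norms for $\projcompl\h$ (including the compatibility condition on $\h_n$, which you correctly attribute to the divergence constraint). Two points are handled differently from the paper, however, and both are worth recording. First, for the purely oscillatory part the paper does not attempt to solve the full inhomogeneous ODE in $x_n$ as you suggest. Instead it first lifts $\projcompl\f$ with a time-periodic half-space heat equation (citing \cite{KyedSauer_Heat}), then lifts the modified divergence data by even reflection and an explicit whole-space Stokes formula, and only then, after a symmetry argument guarantees the lifted normal trace vanishes, solves the remaining boundary-data-only problem via the ODE in $x_n$ (where $\Delta\upres=0$ makes the exponential ansatz tractable). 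Your proposal, by contrast, would require Green's functions/variation-of-constants for the ODE with nonzero source and a non-harmonic pressure, whose $\LR{q}$ estimates in $x_n$ are considerably more delicate; it is not an error, but it replaces one modular reduction with a harder direct computation that the paper deliberately avoids. Second, for uniqueness the paper uses a duality argument (existence of an adjoint solution plus integration by parts), whereas you invoke uniqueness of the ODE representation for $k\neq 0$; the latter is plausible but not airtight as stated, since one must argue that any difference of solutions in the class $\projcompl\WSR{1}{q}(\torus;\LR{q}(\hs))\cap\projcompl\LR{q}(\torus;\WSR{2}{q}(\hs))$ has a Fourier transform with no contribution from the growing exponential modes, and this integrability/decay input is precisely what the duality argument circumvents. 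Everything else — the two-step estimate of $\grad\projcompl\upres$ via the operators built from $\e^{-\snorm{\xi}x_n}$ and $\e^{-\sqrt{\snorm{\xi}^2+ik}\,x_n}$, the $\LR{\infty}$/weak-$\LR{1}$ interpolation endpoints, and the subsequent reduction of the velocity estimate to the time-periodic heat equation — is in line with the paper's argument.
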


The two separated estimates \eqref{MainThm_ProjEst} and \eqref{MainThm_ProjComplEst} of different regularity type for the steady-state $\proj\uvel$ and
the {purely oscillatory} part $\projcompl\uvel$ of the solution, respectively, 
demonstrate the necessity of the decomposition. Observe that the purely oscillatory 
part $\projcompl\uvel$ of the solution is unique, whereas the steady-state part $\proj\uvel$ is not. The projections $\proj$ and $\projcompl$ 
decompose the time-periodic Stokes problem \eqref{SH} into a classical steady-state Stokes problem with respect to data $(\proj\f,\proj\g,\proj\h)$
and a time-periodic Stokes problem with respect to purely oscillatory data $(\projcompl\f,\projcompl\g,\projcompl\h)$, respectively. The first estimate 
\eqref{MainThm_ProjEst} is well-known for the former problem,  whence the main objective in the following will be to establish existence of a unique solution to the latter that satisfies \eqref{MainThm_ProjComplEst}.  

\begin{rem}\label{PerVsTorusFunctionSpacesRemark}
It is possible to avoid the analysis on the torus group $\torus\coloneqq\R/\per\Z$ and instead define the function spaces appearing in Theorem \ref{MainThm} as spaces of $\per$-periodic functions on $\R$.
For a Banach space $E$, let  
\begin{align*}
\CRiper\left(\R; E\right)\coloneqq \setc{f\in\CRi\left(\R; E\right)}{f(t+\per, x) = f(t, x) }
\end{align*}
denote the space of smooth $\per$-time-periodic $E$-valued functions. The Bochner-Lebesgue and Bochner-Sobolev spaces of time-periodic functions can then be introduced as
\begin{align*}
&\LRper{q}\left(\R; E\right)\coloneqq \closure{\CRiper\left(\R; E\right)}{\norm{\cdot}_{\LR{q}\left((0,\per); E\right)}},\\
&\WSRper{k}{q}\left(\R; E\right) \coloneqq \closure{\CRiper\left(\R; E\right)}{\norm{\cdot}_{\WSR{k}{q}\left((0,\per ); E\right)}}.
\end{align*}
Observe that the closures above are taken with respect to a time interval of period $\per$.
Time-periodic Sobolev-Slobodecki\u{\i} spaces can then be defined as real interpolation spaces in the usual way. 
Via the canonical quotient map $\quotientmap\colon\R\to\torus$, the spaces 
$\CRiper\bp{\R; E}$ and $\CRi\bp{\torus; E}$ are isometrically isomorphic with respect to 
the norms $\norm{\cdot}_{\WSR{k}{q}\left((0,\per ); E\right)}$ and $\norm{\cdot}_{\WSR{k}{q}\left(\torus; E\right)}$, respectively,
provided the Haar measure on $\torus$ is normalized appropriately. It follows that 
$\WSRper{s}{q}\left(\R; E\right)$ and $\WSR{s}{q}\left(\torus; E\right)$ are also isometrically isomorphic for all $s$.
In this manner, all the function spaces appearing in Theorem \ref{MainThm} have interpretations as $\per$-time-periodic Bochner spaces.
\end{rem}

\section{Preliminaries}\label{pre}

The objective of this section is to formalize the reformulation of \eqref{SH} in a setting where the time axis is replaced with the torus group
$\torus\coloneqq\R/\per\Z$. This includes definitions of the function spaces appearing in Theorem \ref{MainThm}.

\subsection{Topology, differentiable structure and Fourier transform}

We utilize $\grp$ as a time-space domain. Equipped with the quotient topology induced by the quotient mapping 
\begin{align*}
\pi :\R\times\R^n\to\grp, \ \pi\left(t, x\right) \coloneqq \left(\left[t\right], x\right),
\end{align*}
$\grp$ becomes a locally compact abelian group. We can identify $\grp$ with the domain $\left[0,\per\right)\times\R^n$ via the restriction $\pi\big|_{\left[0, \per\right)\times\Rn}$. The Haar measure $\dg$ on $\grp$ is the product of the Lebesgue measure on $\R^n$ and the Lebesgue measure on $\left[0,\per\right)$. We normalize $\dg$ so that
\begin{align*}
\int_\grp \uvel(g)\,\dg = \frac{1}{\per}\int_0^\per\int_{\Rn} \uvel(t, x)\,\dx\dt.
\end{align*}
There is a bijective correspondence between points $(k, \xi)\in\dualgrp$ and characters
$\chi: \grp\to\C$, $\chi\left(t, x\right)\coloneqq e^{ix\cdot\xi + ikt}$ on $\grp$. Consequently, we can identify the dual group of
$\grp$ with $\dualgrp$. The
compact-open topology on $\dualgrp$ reduces to the product of the Euclidean topology on $\Rn$ and the discrete topology on $\Zgrp$. The Haar measure on $\dualgrp$ is therefore the product of the counting measure on $\Zgrp$ and the Lebesgue measure on $\Rn$.

The spaces of smooth functions on $\grp$ and $\dualgrp$ are defined as
\begin{align}\label{DefOfSmoothFunctionsOnGrp}
\CRi(\grp) \coloneqq \setc{\uvel:\grp\ra\R}{\exists U\in\CRi\bp{\R\times\Rn}:\ U=\uvel\circ\pi}
\end{align}
and 
\begin{align*}
\CRi\Bp{\dualgrp} \coloneqq \setcL{\uvel\in\CR{}\Bp{\dualgrp}}{\forall k\in\Zgrp: \uvel(k, \cdot)\in\CRi(\Rn)},
\end{align*}
respectively.
Derivatives of a function $\uvel\in\CRi(\grp)$ are defined by 
\begin{align*}
\pt^\beta\px^\alpha\uvel \coloneqq \left[\pt^\beta\px^\alpha\left(\uvel\circ\pi\right)\right]\circ\Pi^{-1},
\end{align*} 
with $\Pi \coloneqq \pi\big|_{\left[0,\per\right)\times\Rn}$.
The notion of Schwartz spaces can be extended to locally compact abelian groups (see \cite{Bruhat} and \cite{kyedeiter_PIFBook}).
The so-called Schwartz-Bruhat space on $\grp$ is given by
\begin{align*}
\SR\np{\grp} \coloneqq \setc{\uvel\in\CRi(\grp)}{\forall (\alpha, \beta, \gamma)\in\N_0^n\times\N_0\times\N_0^n: \ \rho_{\alpha, \beta, \gamma}(\uvel)<\infty},
\end{align*}
where
\begin{align*}
\rho_{\alpha, \beta, \gamma}(\uvel) \coloneqq \sup_{(t,x)\in\grp}{\left|x^\gamma\pt^\beta\px^\alpha\uvel(t,x)\right|}.
\end{align*}
Equipped with the semi-norm topology of the family ${\{\rho_{\alpha, \beta, \gamma} | \left(\alpha, \beta, \gamma\right)\in\N_0^n\times\N_0\times\N_0^n \}}$, $\SR(\grp)$ becomes a topological vector space. The corresponding topological dual space $\TDR(\grp)$ equipped with the weak* topology is referred to as the space of tempered distributions on $\grp$. Distributional derivatives for a tempered distribution $\uvel$ are defined by duality as in the classical case.
The Schwartz-Bruhat space on $\dualgrp$ is 
\begin{multline*}
\SR\Bp{\dualgrp} \\\coloneqq \setcL{\uvel\in\CRi\Bp{\dualgrp}}{\forall (\alpha, \beta, \gamma)\in\N_0^n\times\N_0^n\times\N_0: \ \dualrho_{\alpha, \beta, \gamma}(\uvel)<\infty},
\end{multline*}
with the generic semi-norms
\begin{align*}
\dualrho_{\alpha, \beta, \gamma}(\uvel) \coloneqq \sup_{(k,\xi)\in\dualgrp}{\left|\xi^\alpha\partial_\xi^\beta k^\gamma\uvel(k,\xi)\right|}
\end{align*}
inducing the topology.

By $\FT_\grp$ we denote the Fourier transform associated to the locally compact abelian group $\grp$ equipped with the Haar measure introduced above:
\begin{align*}
&\FT_\grp:\SR\np{\grp}\ra\SR\bp{\dualgrp},\\
&\FT_\grp\nb{\uvel}(k,\xi)\coloneqq \ft{\uvel}(k,\xi) \coloneqq \frac{1}{\per}\int_0^\per\int_{\Rn} \uvel(t,x)\,\e^{-ix\cdot\xi-ik t}\,\dx\dt.
\end{align*}
Recall that $\FT_\grp:\SR(\grp)\ra\SR(\dualgrp)$ is a homeomorphism with inverse given by
\begin{align*}
&\iFT_\grp:\SR\bp{\dualgrp}\ra\SR\np{\grp},\\
&\iFT_\grp\nb{\wvel}(t,x)\coloneqq \sum_{k\in\Zgrp}\,\int_{\Rn} \wvel(k,\xi)\,\e^{ix\cdot\xi+ik t}\,\dxi,
\end{align*}
provided the Lebesgue measure $\dxi$ is normalized appropriately.
By duality, $\FT_\grp$ extends to a homeomorphism $\FT_\grp:\TDR(\grp)\ra\TDR(\dualgrp)$.

The Fourier symbol, with respect to $\FT_\grp$, of the projection $\proj$ introduced in \eqref{intro_projections} is the delta distribution $\delta_\Zgrp$, \textit{i.e.}, the
function $\delta_\Zgrp:\Zgrp\ra\C$ with $\delta_\Zgrp(0):=1$ and $\delta_\Zgrp(k):=0$ for $k\neq 0$. Via the symbol, the projections $\proj$ and $\projcompl$ extend to projections on $\TDR\np{\grp}$:
\begin{align}\label{SymbolsOfProjections}
\begin{aligned}
&\proj:\TDR\np{\grp}\ra\TDR\np{\grp},\quad \proj\uvel := \iFT_\grp\bb{\delta_\Zgrp\,\FT_\grp\nb{\uvel}},\\
&\projcompl:\TDR\np{\grp}\ra\TDR\np{\grp},\quad \projcompl\uvel := \iFT_\grp\bb{\np{1-\delta_\Zgrp}\,\FT_\grp\nb{\uvel}}.
\end{aligned}
\end{align}

At this point, we have introduced ample formalism to reformulate \eqref{SH} equivalently as a system of partial differential equations in the time-space domain $\grp$. Moreover, the Fourier transform $\FT_\grp$ enables us the investigate the systems in terms of Fourier-multipliers.
Due to the lack of a comprehensive $\LR{q}$-multiplier theory in the general group setting, we shall utilize a so-called
Transference Principle for this purpose.
The Transference Principle goes back to \textsc{De Leeuw} \cite{Leeuw1965}. The lemma below is due to \textsc{Edwards} and \textsc{Gaudry} \cite{EdwardsGaudry}.

\begin{thm}[\textsc{De Leeuw}, \textsc{Edwards} and \textsc{Gaudry}]\label{transference}
Let $\grpG$ and $\grpH$ be locally compact abelian groups. Moreover, let
$\Phi:\dualgrpG\ra\dualgrpH$ be a continuous homomorphism and $q\in [1,\infty]$. Assume that $m\in\LR{\infty}(\dualgrpH;\C)$ is a continuous $\LR{q}$-multiplier, i.e., there is a constant $C$ such that
\begin{align*}
\forall f\in\LR{2}\left(\grpH\right)\cap\LR{q}\left(\grpH\right):\quad \norm{\iFT_\grpH\nb{m\, \FT_\grpH\nb{f}}}_q\leq C\norm{f}_q.
\end{align*}
Then $m\circ\Phi\in\LR{\infty}(\dualgrpG; \C)$ is also an $\LR{q}$-multiplier with
\begin{align*}
\forall f\in\LR{2}\left(\grpG\right)\cap\LR{q}\left(\grpG\right):\quad \norm{\iFT_\grpG\nb{m\circ\Phi\, \FT_\grpG\nb{f}}}_q\leq C\norm{f}_q.
\end{align*}
\end{thm}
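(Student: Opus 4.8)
That $m\circ\Phi\in\LR{\infty}(\dualgrpG;\C)$ is immediate ($\norm{m\circ\Phi}_\infty\le\norm{m}_\infty$, and $m\circ\Phi$ is continuous as a composition of continuous maps); the content is the multiplier estimate with the \emph{same} constant $C$, which I would establish by the transference argument of \textsc{De Leeuw}, in the form adapted to locally compact abelian groups by \textsc{Edwards} and \textsc{Gaudry}. The first step is a reduction via the graph of $\Phi$. Let $\Gamma\coloneqq\setc{(\gamma,\Phi\gamma)}{\gamma\in\dualgrpG}\subset\dualgrpG\times\dualgrpH$; this is a closed subgroup (closed because $\Phi$ is continuous and $\dualgrpH$ is Hausdorff, a subgroup because $\Phi$ is a homomorphism), the first projection restricts to a topological isomorphism $\sigma\colon\Gamma\to\dualgrpG$, and $\Phi=\pi_2\circ\iota_\Gamma\circ\sigma^{-1}$, with $\iota_\Gamma\colon\Gamma\hookrightarrow\dualgrpG\times\dualgrpH$ the inclusion and $\pi_2\colon\dualgrpG\times\dualgrpH\to\dualgrpH$ the second projection. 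It thus suffices to prove the transference statement for each of these three factors and to compose, since the constant carries through unchanged. On the predual side --- note $\widehat{\dualgrpG\times\dualgrpH}=\grpG\times\grpH$ and $\widehat\Gamma=(\grpG\times\grpH)/\Gamma^\perp$ --- the three factors correspond to multiplier operators on $\LR{q}(\grpH)$, on $\LR{q}(\grpG\times\grpH)$, and on $\LR{q}\big((\grpG\times\grpH)/\Gamma^\perp\big)\cong\LR{q}(\grpG)$.

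\emph{The easy factors.} The topological isomorphism $\sigma$ dualises to a topological isomorphism $\widehat\Gamma\cong\grpG$ and hence --- for a matching normalisation of Haar measure --- to an isometric identification of $\LR{q}$-spaces carrying multiplier operators to multiplier operators of equal norm. For the projection $\pi_2$ a Fubini argument suffices: for $F$ in a dense subclass of $\LR{q}(\grpG\times\grpH)$, the operator with symbol $m\circ\pi_2$ acts by $F(g,\cdot)\mapsto\iFT_\grpH\bb{m\,\FT_\grpH\nb{F(g,\cdot)}}$ for a.e.\ $g\in\grpG$, so integrating the $q$-th power over $h\in\grpH$ first and invoking the hypothesis on $\grpH$ fibrewise gives the bound with the same constant $C$ (valid for every $q\in[1,\infty]$), after which one removes the density restriction by a routine approximation, the operator being a priori bounded on $\LR{2}(\grpG\times\grpH)$.

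\emph{The \textsc{De Leeuw} factor.} There remains the restriction of a continuous $\LR{q}$-multiplier $m$ on $A\coloneqq\dualgrpG\times\dualgrpH$ to the closed subgroup $\Gamma\le A$, i.e.\ the claim that $m|_\Gamma$ is an $\LR{q}(\widehat A/\Gamma^\perp)$-multiplier of no larger norm. For $q\in\set{1,\infty}$ this is again immediate: such $m$ is the Fourier--Stieltjes transform $\widehat\mu$ of a bounded measure $\mu$ on $\widehat A=\grpG\times\grpH$ with $\norm{\mu}=C$, and the push-forward of $\mu$ along the quotient map $\widehat A\to\widehat A/\Gamma^\perp$ represents $m|_\Gamma$ and has total variation $\le C$. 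For $1<q<\infty$ one runs the concentration argument: identify functions on $\widehat A/\Gamma^\perp$ with $\Gamma^\perp$-periodic functions on $\widehat A$; lift a test function, multiply it by a net of ``concentrating'' bumps whose Fourier transforms collapse to the neutral element, use the local uniform continuity of $m$ to replace $m$ on each shrinking lump by a constant at an $\LR{q}$-error tending to $0$, apply the hypothesis on the $\widehat{\widehat A}=A$-side, and pass to the limit in a mean-value identity for $\LR{q}$-norms.

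\emph{The main obstacle} is precisely this last step for an \emph{arbitrary} closed subgroup of an arbitrary locally compact abelian group: the construction of the concentrating net and the justification of the mean-value limit when $\Gamma^\perp$ --- equivalently, the fibres of the dual quotient map --- may be noncompact or have dense image. This is handled by the structure theorem for locally compact abelian groups (which reduces matters to $\R^a$ times a group with a compact open subgroup) together with Weil's quotient integration formula, but it involves no new idea beyond the classical case $A=\R^n$, $\Gamma=\Z^n$, where it is the elementary fact that $\epsilon^{-n}\int_{\R^n}\snorm{f(y/\epsilon)}^q\,\snorm{h(y)}^q\,\dy\to\norm{f}_{\LR{q}(\R^n/\Z^n)}^q\,\norm{h}_{\LR{q}(\R^n)}^q$ for $\Z^n$-periodic $f$ and $h\in\LR{q}(\R^n)$.
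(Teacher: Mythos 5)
The paper does not prove this statement: its ``proof'' is the single line ``See [Edwards--Gaudry, Theorem B.2.1],'' so there is no in-text argument to compare against. Your sketch reproduces the standard strategy behind that cited theorem. The factorization of the continuous homomorphism $\Phi$ through its graph $\Gamma\subset\dualgrpG\times\dualgrpH$ into a topological isomorphism, an inclusion of a closed subgroup, and a coordinate projection is the right decomposition, and your bookkeeping of which groups sit dual to which (in particular $\widehat\Gamma\cong\widehat A/\Gamma^\perp\cong\grpG$ via $\widehat\sigma$) is correct. The projection step by fibrewise application of $T_m$ and Fubini/Minkowski is sound and gives the same constant; the isomorphism step is an isometric relabelling once the Haar measures are matched. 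The genuine \textsc{De Leeuw} step --- restriction of a continuous $\LR{q}$-multiplier to a closed subgroup --- you correctly isolate as the only nontrivial piece, dispose of the endpoints $q\in\{1,\infty\}$ via the measure-multiplier characterisation and pushforward along the dual quotient map, and outline (but, as you acknowledge, do not carry out) the concentration/mean-value argument for $1<q<\infty$, which in full generality needs the LCA structure theorem and Weil's quotient-integration formula. In short: your proposal is a correct and faithful synopsis of the argument that the paper delegates entirely to the literature; the only caveat is that the heart of the subgroup-restriction step is a sketch rather than a complete proof, which you make explicit.
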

\begin{proof}
See \cite[Theorem B.2.1]{EdwardsGaudry}.
\end{proof}

\subsection{Function spaces}\label{FunktionSpacesSection}

Since our proof of the main theorem is based on Fourier-multipliers and interpolation theory, we find it convenient to 
introduce all the relevant Sobolev spaces via Bessel-Potential spaces. 
Classical (inhomogeneous) Bessel-Potential spaces  
are defined for $s\in\R$ and $q\in[1,\infty)$ by 
\begin{align*}
&\HSR{s}{q}\np{\R^n} \coloneqq  \setcl{u\in\TDR\np{\Rn}}{\iFT_{\Rn}\bb{(1+\snorm{\xi}^2)^\frac{s}{2}\FT_{\Rn}\nb{u}}\in\LR{q}\np{\Rn}},\\
&\norm{u}_{\HSR{s}{q}\np{\R^n}} \coloneqq \norm{u}_{s,q}\coloneqq\norm{\iFT_{\Rn}\bb{(1+\snorm{\xi}^2)^\frac{s}{2}\FT_{\Rn}\nb{u}}}_q.
\end{align*}
Classical Sobolev spaces on $\Rn$ are defined as Bessel-Potential spaces of integer order $k\in\Z$, and Sobolev spaces on 
the half-space $\halfspace$ via restriction:
\begin{align*}
\WSR{k}{q}\np{\R^n}\coloneqq\HSR{k}{q}\np{\Rn},\qquad\WSR{k}{q}\np{\halfspace}\coloneqq\setcl{u_{|\halfspace}}{u\in\WSR{k}{q}\np{\Rn}}.
\end{align*}
Observe that for negative-order spaces, \textit{i.e} when $k<0$, the Sobolev space $\WSR{k}{q}\np{\halfspace}$ coincides with the dual space $\bp{\WSR{-k}{q'}\np{\halfspace}}'$ and \emph{not} with the dual space $\np{\WSRN{-k}{q'}\np{\halfspace}}'$. 
In the following, it is essential that the former meaning of $\WSR{k}{q}\np{\halfspace}$ is used.   

Homogeneous Bessel-Potential spaces are defined in accordance with \cite{TriebelTheoryFunctionSpaces} 
by introducing the subspace
\begin{align*}
\SRh\np{\Rn}\coloneqq\setcl{\phi\in\SR\np{\Rn}}{\forall\alpha\in\N_0^n:\ \partial_x^\alpha\ft{\phi}\np{0}=0}
\end{align*}
of $\SR\np{\Rn}$, 
and for $s\in\R$ and $q\in[1,\infty)$ letting
\begin{align*}
&\HSRD{s}{q}\np{\Rn}\coloneqq\setcl{u\in\TDRh\np{\Rn}}{\iFT_{\Rn}\bb{\snorm{\xi}^s \FT_{\Rn}\nb{u}}\in\LR{q}\np{\Rn}},\\
&\norm{u}_{\HSRD{s}{q}\np{\Rn}}\coloneqq\norm{\iFT_{\Rn}\bb{\snorm{\xi}^s \FT_{\Rn}\nb{u}}}_q.
\end{align*}
Due to the lack of regularity of $\snorm{\xi}^s$ at the origin, the above definition of $\HSRD{s}{q}\np{\Rn}$ 
is not meaningful as a subspace of $\TDR\np{\Rn}$. Instead, $\HSRD{s}{q}\np{\Rn}$ is defined as a subspace of $\TDRh\np{\Rn}$.
As such, $\HSRD{s}{q}\np{\Rn}$ is clearly a Banach space.
As above, we define homogeneous Sobolev spaces on $\Rn$ as homogeneous Bessel-Potential spaces of integer order $k\in\Z$, and introduce homogeneous Sobolev spaces on 
the half-space $\halfspace$ via restriction:
\begin{align*}
\WSRD{k}{q}\np{\R^n}\coloneqq\HSRD{k}{q}\np{\Rn},\qquad\WSRD{k}{q}\np{\halfspace}\coloneqq\setcl{u_{|\halfspace}}{u\in\WSRD{k}{q}\np{\Rn}}.
\end{align*}
By the Hahn-Banach Theorem, any functional in $\TDRh\np{\Rn}$ can be extended to a tempered distribution in $\TDR\np{\Rn}$. 
If $s\leq 0$, the extension of an element in $\HSRD{s}{q}\np{\Rn}$ to $\TDR\np{\Rn}$ is unique. 
In the case $s>0$, one may verify that two extensions of an element in $\HSRD{s}{q}\np{\Rn}$ differ at most by addition of a polynomial of order strictly less than $s$. With this ambiguity in mind, one may consider $\HSRD{s}{q}\np{\Rn}$ as a normed ($s\leq 0$) and semi-normed ($s> 0$) subspace of $\TDR\np{\Rn}$.

Sobolev-Slobodecki\u{\i} spaces of both homogeneous and inhomogeneous type are defined via real interpolation in the usual way. For example, the spaces appearing in Theorem \ref{MainThm} are defined by
\begin{align*}
&\WSR{2-\frac{1}{q}}{q}(\ws)\coloneqq\bp{\LR{q}\np{\ws},\WSR{2}{q}\np\ws}_{1-\frac{1}{2q},q},\\
&\WSRD{-\frac{1}{q}}{q}(\ws)\ceq\bp{\WSRD{-1}{q}\np{\ws},\LR{q}\np\ws}_{1-\frac{1}{q},q}
\end{align*}
and equipped with the associated interpolation norms. 

Analogously, Bessel-Potential spaces with underlying time-space domain $\grp$ are introduced via the Fourier transform $\FT_{\grp}$ as
\begin{align*}
&\HSR{r}{q}\bp{\torus;\HSR{s}{q}\np\Rn}\ceq\\
&\qquad\setcl{u\in\TDR\np{\grp}}{\iFT_{\grp}\bb{\np{1+\snorm{k}^2}^{\frac{r}{2}}(1+\snorm{\xi}^2)^\frac{s}{2}\FT_{\grp}\nb{u}}\in\LR{q}\bp{\torus; \LR{q}\np\Rn}}
\end{align*}
equipped with the canonical norm. Again, we refer to Bessel-Potential spaces of integer order $k,l\in\N$ as Sobolev spaces:
\begin{align*}
\WSR{k}{q}\bp{\torus;\WSR{l}{q}\np\Rn}\ceq\HSR{k}{q}\bp{\torus;\HSR{l}{q}\np\Rn}.
\end{align*} 
Sobolev spaces on the time-space domain $\torus\times\hs$ are defined via restriction of the elements in the spaces above. 
In order to introduce homogeneous spaces, we let 
\begin{align*}
\SRh\np{\grp}\coloneqq\setcl{\phi\in\SR\np{\grp}}{\forall\alpha\in\N_0^n:\ \partial_x^\alpha\FT_{\R^n}\nb{\phi}\np{t,0}=0}
\end{align*}
and put
\begin{align*}
&\HSR{r}{q}\bp{\torus;\HSRD{s}{q}\np\Rn}\ceq\\
&\qquad\setcl{u\in\TDRh\np{\grp}}{\iFT_{\grp}\bb{\np{1+\snorm{k}^2}^{\frac{r}{2}}\snorm{\xi}^s\FT_{\grp}\nb{u}}\in\LR{q}\bp{\torus; \LR{q}\np\Rn}}.
\end{align*}
As above, we may consider $\HSR{r}{q}\bp{\torus;\HSRD{s}{q}\np\Rn}$ as a subspace of $\TDR\np{\grp}$ by extension.
Finally, Sobolev-Slobodecki\u{\i} spaces on the domain $\grp$ are defined via real interpolation. For example, 
\begin{align*}
\WSR{1-\frac{1}{2q}}{q}\bp{\torus;\LR{q}(\ws)}
\ceq \bp{\LR{q}\bp{\torus;\LR{q}\np\ws},\WSR{1}{q}\bp{\torus;\LR{q}\np\ws}}_{1-\frac{1}{2q},q}.
\end{align*}
In this way, all the function spaces appearing in Theorem \ref{MainThm} attain rigorous definitions. 
It is easy to verify that these definitions coincide with a classical interpretation as Bochner spaces of vector-valued functions defined on the torus 
$\torus$. 

\subsection{Interpolation}

Although the function spaces appearing in Theorem \ref{MainThm} can all be defined in terms of classical interpolation, our proof of the theorem
relies on a somewhat more refined scale of interpolation spaces. More specifically, it is based on anisotropic Besov spaces with underlying time-space domain $\grp$, which we shall show coincide with the function spaces obtained by real interpolation of the Bessel-Potential spaces introduced above.
Although this task is mainly technical and does not require any significantly new ideas, indeed we shall mimic the proofs of similar results for classical isotropic Besov spaces, these spaces and their interpolation properties are not part of contemporary literature and we shall therefore carry out the
identification here (even in slightly more generality than actually needed for the proof of the main theorem).
To this end, we fix an $m\in\N$ and introduce the parabolic length scale
\begin{align}\label{BS_LengthScale}
\parnorm{\eta}{\xi}\coloneqq(|\eta|^2+|\xi|^{4m})^{\frac{1}{4m}}\quad {\text{for } (\eta,\xi)\in\R\times\R^n}.
\end{align}
The anisotropic Besov spaces defined below pertain to time-periodic parabolic problems of order $2m$. In our analysis of the Stokes problem, we thus put
$m=1$. For simplicity, we omit $m$ in the notation for the function spaces below.  

The anisotropic Besov spaces shall be based on the following anisotropic partition of unit:

\begin{lem}\label{BS_PartitionOfUnity}
Let $m\in\N$ and $\parnorm{\eta}{\xi}$ be given by \eqref{BS_LengthScale}. There is a $\pu\in\CRci\np{\R\times\R^n}$ satisfying
\begin{align}
&\supp\pu = \setc{(\eta,\xi)}{2^{-1}\leq \parnorm{\eta}{\xi}\leq 2}  \label{BS_PartitionOfUnity_Support},\\
&\pu(\eta,\xi)>0 \quad\text{for}\quad 2^{-1}<\parnorm{\eta}{\xi}<2,\label{BS_PartitionOfUnity_Positivity}\\
&\sum_{l=-\infty}^\infty \pu(2^{-2ml}\eta,2^{-l}\xi)=1\quad \text{for}\quad \parnorm{\eta}{\xi}\neq 0.\label{BS_PartitionOfUnity_PartUnity}
\end{align}
\end{lem}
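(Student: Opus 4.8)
The plan is to construct $\pu$ by the standard trick: first build a nonnegative bump function supported in the annulus and strictly positive on its interior, then normalize it by the sum of its dyadic anisotropic dilates. First I would fix a smooth function $\psi\in\CRci(\R)$ with $\psi(s)>0$ for $s\in(2^{-1},2)$ and $\psi(s)=0$ otherwise (for instance a suitably shifted and scaled copy of the usual $\e^{-1/s(1-s)}$-type bump), and set $\tilde\pu(\eta,\xi)\coloneqq\psi\bp{\parnorm{\eta}{\xi}}$. Since $(\eta,\xi)\mapsto\parnorm{\eta}{\xi}=(|\eta|^2+|\xi|^{4m})^{1/4m}$ is continuous on $\R\times\R^n$ and smooth away from the origin, and the annulus $\setc{(\eta,\xi)}{2^{-1}\leq\parnorm{\eta}{\xi}\leq 2}$ stays away from the origin, $\tilde\pu$ is smooth, compactly supported, nonnegative, with support exactly the closed annulus and strictly positive on the open annulus. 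This already gives \eqref{BS_PartitionOfUnity_Support} and \eqref{BS_PartitionOfUnity_Positivity} for $\tilde\pu$ in place of $\pu$.

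Next I would record the scaling identity $\parnorm{2^{-2ml}\eta}{2^{-l}\xi}=2^{-l}\parnorm{\eta}{\xi}$, which follows directly from the definition \eqref{BS_LengthScale}: indeed $|2^{-2ml}\eta|^2+|2^{-l}\xi|^{4m}=2^{-4ml}(|\eta|^2+|\xi|^{4m})$, and taking the $\tfrac{1}{4m}$-th power yields the factor $2^{-l}$. Consequently the dilate $(\eta,\xi)\mapsto\tilde\pu(2^{-2ml}\eta,2^{-l}\xi)$ is supported precisely where $2^{-1}\leq 2^{-l}\parnorm{\eta}{\xi}\leq 2$, i.e. where $2^{l-1}\leq\parnorm{\eta}{\xi}\leq 2^{l+1}$. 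For any fixed $(\eta,\xi)$ with $\parnorm{\eta}{\xi}\neq 0$, only the finitely many indices $l$ with $2^{l-1}\leq\parnorm{\eta}{\xi}\leq 2^{l+1}$ contribute, so the sum $\Sigma(\eta,\xi)\coloneqq\sum_{l\in\Z}\tilde\pu(2^{-2ml}\eta,2^{-l}\xi)$ is locally finite, hence smooth on $\R\times\R^n\setminus\set{0}$. Moreover each $(\eta,\xi)\neq 0$ lies in the open annulus $2^{l-1}<\parnorm{\eta}{\xi}<2^{l+1}$ for at least one $l$ (choosing $l$ so that $2^{l-1}<\parnorm{\eta}{\xi}<2^l$, say), where the corresponding term is strictly positive; therefore $\Sigma(\eta,\xi)>0$ everywhere on $\R\times\R^n\setminus\set{0}$. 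Note also $\Sigma$ is invariant under the substitution $(\eta,\xi)\mapsto(2^{-2m}\eta,2^{-1}\xi)$, i.e. $\Sigma(2^{-2m}\eta,2^{-1}\xi)=\Sigma(\eta,\xi)$, by shifting the summation index.

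Finally I would define
\begin{align*}
\pu(\eta,\xi)\coloneqq
\begin{cases}
\dfrac{\tilde\pu(\eta,\xi)}{\Sigma(\eta,\xi)} & \text{if } \parnorm{\eta}{\xi}\neq 0,\\[2ex]
0 & \text{if } (\eta,\xi)=0.
\end{cases}
\end{align*}
On the set $\parnorm{\eta}{\xi}\neq 0$ this is a quotient of smooth functions with nonvanishing denominator, hence smooth; and it vanishes identically on the neighborhood $\setc{(\eta,\xi)}{\parnorm{\eta}{\xi}<2^{-1}}$ of the origin (there $\tilde\pu\equiv 0$), so $\pu\in\CRci(\R\times\R^n)$ with $\supp\pu=\supp\tilde\pu$, giving \eqref{BS_PartitionOfUnity_Support}; positivity of $\pu$ on the open annulus is inherited from $\tilde\pu$ and positivity of $\Sigma$, giving \eqref{BS_PartitionOfUnity_Positivity}. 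For \eqref{BS_PartitionOfUnity_PartUnity}, using the scaling identity $\parnorm{2^{-2ml}\eta}{2^{-l}\xi}=2^{-l}\parnorm{\eta}{\xi}\neq 0$ and the dilation-invariance $\Sigma(2^{-2ml}\eta,2^{-l}\xi)=\Sigma(\eta,\xi)$, we compute
\begin{align*}
\sum_{l=-\infty}^\infty \pu(2^{-2ml}\eta,2^{-l}\xi)
=\sum_{l=-\infty}^\infty \frac{\tilde\pu(2^{-2ml}\eta,2^{-l}\xi)}{\Sigma(2^{-2ml}\eta,2^{-l}\xi)}
=\frac{1}{\Sigma(\eta,\xi)}\sum_{l=-\infty}^\infty \tilde\pu(2^{-2ml}\eta,2^{-l}\xi)
=\frac{\Sigma(\eta,\xi)}{\Sigma(\eta,\xi)}=1
\end{align*}
for every $(\eta,\xi)$ with $\parnorm{\eta}{\xi}\neq 0$, which is \eqref{BS_PartitionOfUnity_PartUnity}.

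I do not anticipate a genuine obstacle here; the only point requiring a little care is verifying that the anisotropic length scale $\parnorm{\cdot}{\cdot}$ scales exactly by the factor $2^{-l}$ under the dilation $(\eta,\xi)\mapsto(2^{-2ml}\eta,2^{-l}\xi)$ — this is what forces the particular exponents $2^{-2ml}$ and $2^{-l}$ in \eqref{BS_PartitionOfUnity_PartUnity} — together with the local finiteness of the sum $\Sigma$, which ensures $\pu$ is smooth and the partition-of-unity identity makes sense pointwise.
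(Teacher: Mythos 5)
Your proposal is correct and matches the paper's own argument: compose a one-variable bump with the parabolic length $\pex$ to obtain a function supported on the closed annulus and positive on its interior, then normalize by the locally finite sum of its anisotropic dyadic dilates. One minor slip: the parenthetical ``choosing $l$ so that $2^{l-1}<\pex<2^l$'' fails when $\pex$ is exactly a power of two, but the surrounding claim (that at least one summand of $\Sigma(\eta,\xi)$ is strictly positive for every $(\eta,\xi)\neq 0$) still holds since one may take $l$ with $2^{l-1}<\pex<2^{l+1}$.
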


\begin{proof}
Let $\hf\in\CRi(\R)$ with $\supp\hf=\setc{y\in\R}{2^{-1}\leq\snorm{y}\leq2}$
and $h(y)>0$ for $2^{-1}<\snorm{y}<2$. Then $\f:\R\times\R^n\ra\R,\ \f(\eta,\xi)\coloneqq\hf\bp{\pex}$ satisfies
\eqref{BS_PartitionOfUnity_Support} and \eqref{BS_PartitionOfUnity_Positivity}. Moreover, $\f(2^{-2ml}\eta,2^{-l}\xi)\neq 0$ iff 
$2^{l-1}<\pex<2^{l+1}$. Thus $\f(2^{-2ml}\eta,2^{-l}\xi)\neq 0$ for at least one and at most two $l\in\Z$. Consequently,
\begin{align*}
\pu:\R\times\R^n\ra\R,\quad \pu(\eta,\xi)\coloneqq
\begin{pdeq}
&\frac{f(\eta,\xi)}{\sum_{l=-\infty}^\infty \f(2^{-2ml}\eta,2^{-l}\xi)} &&\tif\pex\neq 0,\\
&0 && \tif\pex=0
\end{pdeq}
\end{align*}
is well-defined. It is easy to verify that $\pu$ satisfies \eqref{BS_PartitionOfUnity_Support}--\eqref{BS_PartitionOfUnity_PartUnity}.
\end{proof}

\begin{defn}[Anisotropic Besov and Bessel-Potential Spaces]\label{BS_DefOfBesovSpace}
Let $\pu\in\CRci\np{\R\times\R^n}$ be as in Lemma \ref{BS_PartitionOfUnity}, $s\in\R$ and $p,q\in[1,\infty)$. We define anisotropic Besov spaces
\begin{align}\label{BS_DefOfBesovSpace_Defn}
\begin{aligned}
&\BSRcompl{s}{pq}\np{\grpx}\coloneqq\setc{\f\in\projcompl\TDR(\grpx)}{\norm{\f}_{\BSRcompl{s}{pq}}<\infty},\\
&\norm{\f}_{\BSRcompl{s}{pq}} \coloneqq \Bp{\sum_{l=0}^\infty \bp{2^{sl} \norm{\iFT_{\grpx}\bb{\pu(2^{-2ml}k,2^{-l}\xi) \FT_\grpx\nb{f} }}_p}^q}^{\frac{1}{q}},
\end{aligned}
\end{align} 
and anisotropic Bessel-Potential spaces
\begin{align}\label{BS_DefOfBesovSpace_DefnBessel}
\begin{aligned}
&\ABPSRcompl{s}{p}\np{\grpx}\coloneqq\setc{\f\in\projcompl\TDR(\grpx)}{\norm{\f}_{\ABPSRcompl{s}{p}}<\infty},\\
&\norm{\f}_{\ABPSRcompl{s}{p}} \coloneqq
\norm {\iFT_{\grp}\bb{\parnorm{k}{\xi}^s\FT_{\grp}\nb{\f}}}_p.
\end{aligned}
\end{align}
\end{defn}
Observe that $\BSRcompl{s}{pq}\np{\grpx}$ and $\ABPSRcompl{s}{p}\np{\grpx}$ are defined as subspaces of the purely oscillatory distributions $\projcompl\TDR(\grpx)$ rather than
$\TDR(\grpx)$. Recalling \eqref{SymbolsOfProjections}, it is easy to verify that $\norm{\cdot}_{\BSRcompl{s}{pq}}$
and $\norm{\cdot}_{\ABPSRcompl{s}{p}}$ are therefore norms (rather than mere semi-norms), and $\BSRcompl{s}{pq}\np{\grpx}$ and $\ABPSRcompl{s}{p}\np{\grpx}$
Banach spaces. 
As in the case of classical (isotropic) spaces, real interpolation of anisotropic Bessel-potential spaces yields anisotropic Besov spaces:

\begin{lem}\label{BS_InterpolationLem}
Let $p,q\in(1,\infty)$, $\theta\in(0,1)$, $s_0,s_1\in\R$ and $s\coloneqq(1-\theta)s_0+\theta s_1$.
If $s_0\neq s_1$, then 
$\bp{\ABPSRcompl{s_0}{p}\np\grpx,\ABPSRcompl{s_1}{p}\np\grpx}_{\theta,q}=\BSRcompl{s}{pq}\np\grpx$ with equivalent norms. 
\end{lem}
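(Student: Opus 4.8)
The plan is to follow the classical Littlewood--Paley proof that identifies real interpolation spaces of Bessel-potential spaces with Besov spaces (as in \cite{TriebelTheoryFunctionSpaces}), adapting it to the anisotropic, time-periodic setting by replacing the Euclidean dyadic decomposition with the anisotropic partition of unity from Lemma \ref{BS_PartitionOfUnity} and the Euclidean Fourier transform with $\FT_\grpx$. The key observation that makes everything work is that the anisotropic length scale $\parnorm{k}{\xi}$ is, on the support of $\pu(2^{-2ml}k,2^{-l}\xi)$, comparable to $2^l$; hence the multiplier $\parnorm{k}{\xi}^s$ acts on the $l$-th Littlewood--Paley block essentially as multiplication by $2^{sl}$, which is exactly what converts the $\ell^q$-sum defining $\BSRcompl{s}{pq}$ into the interpolation norm.

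First I would record the \emph{Littlewood--Paley characterization} of the Bessel-potential spaces: for $\varphi_l(k,\xi)\coloneqq\pu(2^{-2ml}k,2^{-l}\xi)$ one has, uniformly in $l\geq 0$ and using the Transference Principle (Theorem \ref{transference}) together with the fact that $\parnorm{k}{\xi}^s\varphi_l(k,\xi)2^{-sl}$ and its reciprocal are $\LR{q}$-multipliers on $\grpx$ with bounds independent of $l$ (their symbols, suitably rescaled, satisfy the Marcinkiewicz/Mikhlin conditions on a fixed dyadic annulus), the estimate
\begin{align*}
2^{sl}\,\norm{\iFT_\grpx\bb{\varphi_l\,\FT_\grpx\nb{f}}}_p \approx \norm{\iFT_\grpx\bb{\parnorm{k}{\xi}^s\varphi_l\,\FT_\grpx\nb{f}}}_p .
\end{align*}
This immediately gives the embedding $\ABPSRcompl{s}{p}\subseteq\BSRcompl{s}{p,q}$ in the easy direction for $q=p$, and more importantly reduces the interpolation identity to the corresponding statement for the sequence spaces $\ell^q_s$ of $\LR{p}(\grpx)$-valued sequences.

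Next I would set up the standard \emph{retraction--coretraction} argument. Define $J\colon\ABPSRcompl{s}{p}\to\ell^{p}_s\bp{\LR{p}(\grpx)}$ by $Jf\coloneqq(\iFT_\grpx[\parnorm{k}{\xi}^s\varphi_l\FT_\grpx f])_{l\ge 0}$ and a reconstruction operator $R$ in the opposite direction built from a second partition function $\tilde\pu$ with $\tilde\pu\pu=\pu$, so that $R\circ J=\id$ on $\projcompl\TDR(\grpx)$ (here the purely-oscillatory restriction is what makes $R\circ J$ the identity rather than the identity modulo low frequencies, since the $k=0$ mode is excluded). One checks $J$ and $R$ are bounded between the pairs $(\ABPSRcompl{s_i}{p},\ell^{p}_{s_i}(\LR{p}))$ for $i=0,1$. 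By the general retraction theorem for real interpolation, $\bp{\ABPSRcompl{s_0}{p},\ABPSRcompl{s_1}{p}}_{\theta,q}$ is (isomorphic to) the image under $R$ of $\bp{\ell^{p}_{s_0}(\LR{p}),\ell^{p}_{s_1}(\LR{p})}_{\theta,q}$, and the latter interpolation space is the classical vector-valued $\ell^q_s$-space by the known interpolation of weighted sequence spaces (with $s=(1-\theta)s_0+\theta s_1$, using $s_0\neq s_1$). Unravelling the definitions, $R\bp{\ell^{q}_s(\LR{p})}=\BSRcompl{s}{p,q}$ with equivalent norms, which is the claim.

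The main obstacle I expect is the multiplier estimate underlying the Littlewood--Paley characterization: one must verify that the rescaled symbols $\parnorm{2^{2ml}k}{2^{l}\xi}^{\pm s}\pu(k,\xi)$ are $\LR{q}$-multipliers on $\grpx$ with constants \emph{uniform in $l$}, and then transfer these bounds from $\R\times\R^n$ to $\grpx$ via Theorem \ref{transference} with $\Phi\colon\dualgrp\hookrightarrow\R\times\R^n$ the inclusion. Because $\parnorm{\cdot}{\cdot}^{\pm s}$ is smooth and homogeneous (of degree $\pm s$ in the parabolic scaling) away from the origin and $\pu$ is supported in a fixed annulus bounded away from the origin, the requisite Mikhlin-type derivative bounds hold with $l$-independent constants after rescaling — but making the anisotropic scaling bookkeeping precise, and ensuring the discrete time-frequency variable $k\in\Zgrp$ does not cause trouble (it does not, since the continuous multiplier theorem on $\R\times\R^n$ plus transference handles it), is the technical heart of the argument. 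Everything else is the routine retraction machinery, which is why, as remarked before the statement, this identification requires no genuinely new ideas beyond mimicking the isotropic case.
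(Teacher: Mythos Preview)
Your strategy --- reduce to weighted sequence spaces via a retraction--coretraction pair and then invoke the known identity $(\ell^{q_0}_{s_0}(\LR{p}),\ell^{q_1}_{s_1}(\LR{p}))_{\theta,q}=\ell^q_s(\LR{p})$ --- is a legitimate alternative to what the paper does, but the retraction you write down does not work. First, your $J$ carries a factor $\parnorm{k}{\xi}^s$ with the \emph{interpolated} $s$, so either $J$ depends on the endpoint (and is then not a single operator on the interpolation couple) or the scaling in the target is off. More seriously, even with the natural choice $Jf=(\iFT_\grpx[\varphi_l\FT_\grpx f])_{l\ge 0}$, the blockwise multiplier bound you record gives only $\norm{\iFT_\grpx[\varphi_l\FT_\grpx f]}_p\lesssim 2^{-s_il}\norm{f}_{\ABPSRcompl{s_i}{p}}$, hence $J\colon\ABPSRcompl{s_i}{p}\to\ell^{\infty}_{s_i}(\LR{p})$, not $\ell^{p}_{s_i}(\LR{p})$. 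Boundedness into $\ell^{p}_{s_i}(\LR{p})$ is exactly the embedding $\ABPSRcompl{s_i}{p}\hookrightarrow\BSRcompl{s_i}{pp}$, which fails for $p<2$. Dually, $R\colon(g_l)\mapsto\sum_l\iFT_\grpx[\tilde\pu_l\FT_\grpx g_l]$ is bounded $\ell^{1}_{s_i}(\LR{p})\to\ABPSRcompl{s_i}{p}$ by the triangle inequality, but boundedness from $\ell^{p}_{s_i}(\LR{p})$ is $\BSRcompl{s_i}{pp}\hookrightarrow\ABPSRcompl{s_i}{p}$, which fails for $p>2$. So for $p\neq 2$ the pair $(J,R)$ is never a retraction onto $\ell^{p}_{s_i}(\LR{p})$, and the argument collapses.

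The paper avoids this by computing the $K$- and $J$-functionals directly. For the inclusion $\bp{\ABPSRcompl{s_0}{p},\ABPSRcompl{s_1}{p}}_{\theta,q}\subset\BSRcompl{s}{pq}$ it bounds each block, for an arbitrary decomposition $f=f_0+f_1$, by $2^{-ls_0}\norm{f_0}_{\ABPSRcompl{s_0}{p}}+2^{-ls_1}\norm{f_1}_{\ABPSRcompl{s_1}{p}}$ (this is your multiplier estimate), takes the infimum to get a $K$-functional, and sums. For the reverse inclusion it introduces a second bump $\psi$ with $\psi\pu=\pu$, uses the companion multiplier $\psi(2^{-2ml}\cdot,2^{-l}\cdot)\parnorm{\cdot}{\cdot}^{s_i}$ to bound $\norm{\iFT_\grpx[\varphi_l\FT_\grpx f]}_{\ABPSRcompl{s_i}{p}}\lesssim 2^{ls_i}\norm{\iFT_\grpx[\varphi_l\FT_\grpx f]}_p$, controls the $J$-functional of each block, and concludes via the discrete $J$-method together with $f=\sum_{l\ge 0}\iFT_\grpx[\varphi_l\FT_\grpx f]$ (valid precisely because $\projcompl f=f$). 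No embedding of $\ABPSRcompl{s}{p}$ into an $\ell^p$-type sequence space is ever asserted. If you want to repair the retraction route, you must either retract onto $\LR{p}(\ell^2_{s_i})$ --- which requires an anisotropic vector-valued Littlewood--Paley theorem on $\grpx$, a nontrivial extra ingredient --- or first sandwich $\ABPSRcompl{s_i}{p}$ between $\BSRcompl{s_i}{p,\min(p,2)}$ and $\BSRcompl{s_i}{p,\max(p,2)}$ and interpolate Besov spaces instead.
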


\begin{proof}
For $l\in\N_0$ and $r\in\R$ let 
\begin{align*}
\mult^r_l:\R\times\R^n\ra\C,\quad\mult^r_l(\eta,\xi)\coloneqq \pu\bp{2^{-2ml}\eta,2^{-l}\xi}\pex^{-r}.
\end{align*}
We claim that $\mult^r_l$ is an $\LR{p}\np{\R; \LR{p}\np\Rn}$-multiplier, which we verify by showing that $\mult^r_l$ meets the condition
of Marcinkiewicz's multiplier theorem (see for example \cite[Corollary 6.2.5]{Grafakos}).  
For this purpose, we utilize only 
\begin{align}
&\supp\pu\bp{2^{-2ml}\cdot,2^{-l}\cdot}=\setc{(\eta,\xi)\in\R\times\R^n}{2^{l-1}\leq\pex\leq2^{l+1}},\label{BS_InterpolationThm_SuppProperty}
\end{align}
and that $g(\eta,\xi)\coloneqq\pex^{-r}$ is parabolically $\np{-r}$-homogeneous, that is,
\begin{align}
&\forall\lambda>0:\quad \g(\eta,\xi)=\lambda^{-r} g(\lambda^{-2m}\eta,\lambda^{-1}\xi).\label{BS_InterpolationThm_HomoProperty}
\end{align}
From \eqref{BS_InterpolationThm_SuppProperty} we immediately obtain
$\norm{\mult^r_l}_\infty \leq \Ccn{C} \norm{\pu}_\infty 2^{-lr}$, 
with $\Ccnlast{C}$ independent on $l$. By \eqref{BS_InterpolationThm_HomoProperty}, we further observe that 
\begin{align*}
\eta\,\partial_\eta\mult^r_l(\eta,\xi) &= 2^{-2ml}\eta\, \partial_\eta\pu\bp{2^{-2ml}\eta,2^{-l}\xi}\, \g(\eta,\xi) \\
&\quad + \pu\bp{2^{-2ml}\eta,2^{-l}\xi}\,\lambda^{-r}\, \partial_\eta g(\lambda^{-2m}\eta,\lambda^{-1}\xi)\,\lambda^{-2m}\eta.
\end{align*}
Choosing $\lambda\coloneqq\pex$ and recalling \eqref{BS_InterpolationThm_SuppProperty}, we thus deduce
$\norm{\eta\,\partial_\eta\mult^r_l}_\infty \leq \Ccn{C} \norm{\pu}_\infty 2^{-lr}$, 
with $\Ccnlast{C}$ independent on $l$. Similarly, we obtain 
\begin{align*}
\sum_{\alpha\in\set{0,1}^{n+1}} 
\norm{\xi_1^{\alpha_1}\cdots\xi_n^{\alpha_n}\eta^{\alpha_{n+1}}
\partial_{\xi_1}^{\alpha_1}\cdots\partial_{\xi_n}^{\alpha_n}\partial_{\eta}^{\alpha_{n+1}} \mult^r_l}_\infty\leq \Ccn{C} \norm{\pu}_\infty 2^{-lr} 
\end{align*}
with $\Ccnlast{C}$ independent on $l$.
It follows from Marcinkiewicz's multiplier theorem that $\mult^r_l$ is an $\LR{p}(\R; \LR{p}(\R^n))$-multiplier. 
Consequently, the Transference Principle (Theorem \ref{transference}) implies that $\multrestriction{{\mult^r_l}}{\Zgrp\times\R^n}$ 
is an $\LR{p}(\torus; \LR{p}(\Rn))$-multiplier with
\begin{align*}
\normL{\phi\mapsto\iFT_{\grpx}\bb{\mult^r_l(k,\xi)\FT_\grpx\nb{\phi}}}_{\linearmaps\np{\LR{p}\np{\torus; \LR{p}\np\Rn},\LR{p}\np{\torus; \LR{p}\np\Rn}}}<\Ccn{C} \norm{\pu}_\infty 2^{-lr}.
\end{align*}
Let $\f\in\bp{\ABPSRcompl{s_0}{p}\np\grpx,\ABPSRcompl{s_1}{p}\np\grpx}_{\theta,q}$. Consider a decomposition $f=f_0+f_1$ with 
$\f_0\in\ABPSRcompl{s_0}{p}\np\grpx$ and $\f_1\in\ABPSRcompl{s_1}{p}\np\grpx$. We deduce
\begin{align*}
&\norm{\iFT_{\grpx}\bb{\pu(2^{-2ml}k,2^{-l}\xi) \FT_\grpx\nb{f} }}_p \\
&\quad\leq \norm{\iFT_{\grpx}\bb{\mult^{s_0}_l\FT_\grpx\bb{\iFT_{\grpx}\bb{\parnorm{k}{\xi}^{s_0}\FT_\grpx\nb{\f_0}}}}}_p\\
&\quad\quad + \norm{\iFT_{\grpx}\bb{\mult^{s_1}_l\FT_\grpx\bb{\iFT_{\grpx}\bb{\parnorm{k}{\xi}^{s_1}\FT_\grpx\nb{\f_1}}}}}_p\\
&\quad\leq \Ccn{C}\bp{2^{-ls_0}\norm{f_0}_{\ABPSRcompl{s_0}{p}} + 2^{-ls_1}\norm{f_1}_{\ABPSRcompl{s_1}{p}} }\\
&\quad\leq \Ccn{C}2^{-ls_0}\bp{\norm{f_0}_{\ABPSRcompl{s_0}{p}} + 2^{l(s_0-s_1)}\norm{f_1}_{\ABPSRcompl{s_1}{p}} }.
\end{align*}
We now employ the $K$-method (see for example \cite[Chapter 3.1]{BL76}) 
to characterize the interpolation space $\bp{\ABPSRcompl{s_0}{p}\np\grpx,\ABPSRcompl{s_1}{p}\np\grpx}_{\theta,q}$. Taking infimum over all decompositions $f_0,f_1$ in the inequality above, we find that 
\begin{align*}
\norm{\iFT_{\grpx}\bb{\pu(2^{-2ml}k,2^{-l}\xi) \FT_\grpx\nb{f} }}_p \leq \Ccn{C}\, 2^{-ls_0}\,K\bp{2^{l(s_0-s_1)},\f,\ABPSRcompl{s_0}{p},\ABPSRcompl{s_1}{p}}, 
\end{align*}
which implies
\begin{align*}
\norm{\f}_{\BSRcompl{s}{pq}} \leq \Ccn{C} \Bp{\sum_{l=0}^\infty\bp{2^{\theta l(s_1-s_0)}\,K\np{2^{l(s_0-s_1)},\f,\ABPSRcompl{s_0}{p},\ABPSRcompl{s_1}{p}}}^q}^{\frac{1}{q}}
\leq \Ccn{C}\,\norm{\f}_{\bp{\ABPSRcompl{s_0}{p},\ABPSRcompl{s_1}{p}}_{\theta,q}},
\end{align*}
where the last inequality above is valid since $s_0\neq s_1$.

Now consider $\f\in\BSRcompl{s}{pq}\np\grpx$. Let $l\in\N_0$. Choose $\psi\in\CRci(\R\times\R^n)$ with $\psi(\eta,\xi)=1$ for $2^{-1}\leq\pex\leq2$ and 
$\supp\psi=\setc{(\eta,\xi)\in\R\times\R^n}{4^{-1}\leq\pex\leq 4}$. 
Using the same technique as above, this time utilizing the multiplier 
\begin{align*}
\tmult^r_l:\R\times\R^n\ra\C,\quad\tmult^r_l(\eta,\xi)\coloneqq \psi\bp{2^{-2ml}\eta,2^{-l}\xi}\pex^{-r},
\end{align*}
we can estimate
\begin{align*}
&\norm{\iFT_{\grpx}\bb{\pu(2^{-2ml}k,2^{-l}\xi) \FT_\grpx\nb{f} }}_{\ABPSRcompl{s_1}{p}}\\
&\qquad=\norm{\iFT_{\grpx}\bb{\psi(2^{-2ml}k,2^{-l}\xi)\, \pu(2^{-2ml}k,2^{-l}\xi)\, \FT_\grpx\nb{f} }}_{\ABPSRcompl{s_1}{p}}\\ 
&\qquad \leq \Ccn{C}\,2^{ls_1}\norm{\iFT_{\grpx}\bb{\pu(2^{-2ml}k,2^{-l}\xi) \FT_\grpx\nb{f} }}_{p},
\end{align*}
and similarly
\begin{align*}
\norm{\iFT_{\grpx}\bb{\pu(2^{-2ml}k,2^{-l}\xi) \FT_\grpx\nb{f} }}_{\ABPSRcompl{s_0}{p}} \leq \Ccn{C}\,2^{ls_0}\norm{\iFT_{\grpx}\bb{\pu(2^{-2ml}k,2^{-l}\xi) \FT_\grpx\nb{f} }}_{p}.
\end{align*}
We thus obtain
\begin{align*}
&2^{-l\theta(s_0-s_1)}{2^{l(s_0-s_1)} \norm{\iFT_{\grpx}\bb{\pu(2^{-2ml}k,2^{-l}\xi) \FT_\grpx\nb{f} }}_{\ABPSRcompl{s_1}{p}}} \\
&\qquad=2^{ls} 2^{-ls_1} \norm{\iFT_{\grpx}\bb{\pu(2^{-2ml}k,2^{-l}\xi) \FT_\grpx\nb{f} }}_{\ABPSRcompl{s_1}{p}}  \\
&\qquad\leq \Ccn{C} 2^{ls} \norm{\iFT_{\grpx}\bb{\pu(2^{-2ml}k,2^{-l}\xi) \FT_\grpx\nb{f} }}_{p}  
\end{align*}
and
\begin{align*}
&2^{-l\theta(s_0-s_1)}{ \norm{\iFT_{\grpx}\bb{\pu(2^{-2ml}k,2^{-l}\xi) \FT_\grpx\nb{f} }}_{\ABPSRcompl{s_0}{p}}} \\
&\qquad=2^{ls} 2^{-ls_0} \norm{\iFT_{\grpx}\bb{\pu(2^{-2ml}k,2^{-l}\xi) \FT_\grpx\nb{f} }}_{\ABPSRcompl{s_0}{p}}  \\
&\qquad\leq \Ccn{C} 2^{ls} \norm{\iFT_{\grpx}\bb{\pu(2^{-2ml}k,2^{-l}\xi) \FT_\grpx\nb{f} }}_{p}.  
\end{align*}
We now employ the $J$-method (see for example \cite[Chapter 3.2]{BL76}) to characterize
the interpolation space $\bp{\ABPSRcompl{s_0}{p}\np\grpx,\ABPSRcompl{s_1}{p}\np\grpx}_{\theta,q}$. By the last two estimates above, we see that 
\begin{align*}
&2^{-l\theta(s_0-s_1)}\,J\bp{2^{l(s_0-s_1)},\iFT_{\grpx}\bb{\pu(2^{-2ml}k,2^{-l}\xi) \FT_\grpx\nb{f} }}\\ 
&\qquad\leq \Ccn{C}\,2^{ls} \norm{\iFT_{\grpx}\bb{\pu(2^{-2ml}k,2^{-l}\xi) \FT_\grpx\nb{f}}}_p.
\end{align*} 
Since $\projcompl\f=\f$, we find that $f=\sum_{l=0}^\infty \iFT_{\grpx}\bb{\pu(2^{-2ml}k,2^{-l}\xi) \FT_\grpx\nb{f}}$ with 
convergence in the space $\ABPSRcompl{s_0}{p}\np\grpx+\ABPSRcompl{s_1}{p}\np\grpx$. Recalling that $s_0\neq s_1$, we thus conclude that 
\begin{align*}
\norm{\f}_{\bp{\ABPSRcompl{s_0}{p},\ABPSRcompl{s_1}{p}}_{\theta,q}} \leq \Ccn{C}\,\norm{\f}_{\BSRcompl{s}{pq}},
\end{align*}
and thereby the lemma.
\end{proof}

\section{Proof of Main Theorem}

Utilizing the formalism introduced in Section \ref{pre}, we can equivalently reformulate \eqref{SH} in a setting where the time axis $\R$ is replaced with the torus $\torus\coloneqq\R/\per\Z$. In this setting, the periodicity condition is no longer needed and we obtain the equivalent problem
\begin{align}\label{SHTorus}
\begin{pdeq}
\pt\uvel - \Delta\uvel + \grad \upres &= \f && \tin\torus\times\halfspace, \\
\Div\uvel &= \g && \tin\torus\times\halfspace, \\
\uvel &= \h && \ton\torus\times\partial\halfspace.
\end{pdeq}
\end{align}
In order to investigate \eqref{SHTorus}, we employ the projections $\proj$ and $\projcompl$ to decompose the problem into a steady-state and
a so-called \emph{purely oscillatory} problem. More specifically, we observe that $\np{\uvel,\upres}$ is a solution to \eqref{SHTorus} if and only if $(\vvel,\vpres)\coloneqq(\proj\uvel,\proj\upres)$ is a solution to the steady-state problem  
\begin{align}\label{SHGS}
\begin{pdeq}
- \Delta\vvel + \grad\vpres &= \proj\f && \tin\halfspace, \\
\Div\vvel &= \proj\g && \tin\halfspace, \\
\vvel &= \proj\h && \ton\partial\halfspace
\end{pdeq}
\end{align}
and $\np{\wvel,\wpres}\coloneqq\np{\projcompl\uvel,\projcompl\upres}$ is a solution to 
\begin{align}\label{SHGP}
\begin{pdeq}
\pt\wvel - \Delta\wvel + \grad\wpres &= \projcompl\f && \tin\torus\times\halfspace, \\
\Div\wvel &= \projcompl\g && \tin\torus\times\halfspace, \\
\wvel &=  \projcompl\h && \ton\torus\times\partial\halfspace.
\end{pdeq}
\end{align}
The steady-state problem \eqref{SHGS} is a classical Stokes problem, for which a comprehensive theory is available. We therefore focus on the
purely oscillatory problem \eqref{SHGP}, which only differs from \eqref{SH} by having purely oscillatory data.
We start with the case of non-homogeneous boundary values. 

\begin{prop}\label{PurelyOscProblem_HomoBrdData}
Let $q\in (1, \infty)$ and $n\geq 2$. For any vector field $\rhsh$ with
\begin{align}\label{PurelyOscProblem_HomoBrdData_DataReg}
\begin{aligned}
&\rhsh\in\projcompl\WSR{1-\frac{1}{2q}}{q}\bp{\torus;\LR{q}(\ws)}^n\cap\projcompl\LR{q}\bp{\torus;\WSR{2-\frac{1}{q}}{q}(\ws)}^n,\\ 
&\rhsh_{n}\in\projcompl\WSR{1}{q}(\torus; \WSRD{-\frac{1}{q}}{q}(\ws))
\end{aligned}
\end{align}
there is a solution 
\begin{align}\label{PurelyOscProblem_HomoBrdData_SolReg}
\begin{aligned}
&\uvel\in\projcompl\WSR{1}{q}\bp{\torus;\LR{q}(\hs)}^n\cap\projcompl\LR{q}\bp{\torus;\WSR{2}{q}(\hs)}^n,\\
&\upres\in\projcompl\LR{q}\bp{\torus;\WSRD{1}{q}(\hs)}
\end{aligned}
\end{align}
to 
\begin{align}\label{PurelyOscProblem_HomoBrdData_Eq}
\begin{pdeq}
\pt\uvel - \Delta\uvel + \grad \upres &= 0 && \tin\torus\times\halfspace, \\
\Div\uvel &= 0 && \tin\torus\times\halfspace, \\
\uvel &= \rhsh && \ton\torus\times\partial\halfspace
\end{pdeq}
\end{align}
that satisfies 
\begin{align}
\begin{aligned}\label{PurelyOscProblem_HomoBrdData_Est}
&\norm{\uvel}_{\WSR{1}{q}\np{\torus;\LR{q}(\hs)}\cap\LR{q}\np{\torus;\WSR{2}{q}(\hs)}}
+ \norm{\grad\upres}_{\LR{q}\np{\torus;\LR{q}(\hs)}}\\
&\qquad \leq \Ccn{C}\, 
\bp{\norm{\rhsh}_{\WSR{1-\frac{1}{2q}}{q}\np{\torus;\LR{q}(\ws)}\cap\LR{q}\np{\torus;\WSR{2-\frac{1}{q}}{q}(\ws)}}
+\norm{\rhsh_n}_{\WSR{1}{q}\np{\torus; \WSRD{-\frac{1}{q}}{q}(\ws)}}
}
\end{aligned}
\end{align}
with $\Ccn{C}=\Ccn{C}(n,q,\per)$.
\end{prop}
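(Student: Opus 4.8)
The plan is to apply the partial Fourier transform $\FT_{\torus\times\ws}$ in time and in the tangential spatial variables so as to reduce \eqref{PurelyOscProblem_HomoBrdData_Eq} to an ordinary differential equation in the normal variable $x_n$, to solve this equation explicitly, and then to estimate the resulting solution operator by combining the Transference Principle (Theorem~\ref{transference}) with the anisotropic Besov spaces introduced in Section~\ref{pre}.

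\emph{Step 1 (reduction and explicit solution).} Write $x=(x',x_n)\in\ws\times(0,\infty)$ and let $(k,\xi')\in\Zgrp\times\ws$; since $\rhsh$ is purely oscillatory, $\FT_{\torus\times\ws}[\rhsh]$ vanishes for $k=0$, so only $\snorm{k}\geq\perf$ enters. Applying $\FT_{\torus\times\ws}$ to \eqref{PurelyOscProblem_HomoBrdData_Eq} and taking the divergence of the momentum equation shows that $\ft{\upres}$ is harmonic in $x_n$, whence the decaying solution is $\ft{\upres}(k,\xi',x_n)=P(k,\xi')\,\e^{-\snorm{\xi'}x_n}$. With $\mu=\mu(k,\xi')\coloneqq\sqrt{ik+\snorm{\xi'}^2}$, $\realpart\mu>0$, each velocity component then satisfies a forced ODE $\bp{\mu^2-\partial_{x_n}^2}\ft{\uvel}_j=R_j$ with $R_j$ proportional to $P\,\e^{-\snorm{\xi'}x_n}$, and since $\mu^2-\snorm{\xi'}^2=ik\neq0$ there is no resonance, the bounded solution being a combination of $\e^{-\mu x_n}$ (homogeneous) and $\e^{-\snorm{\xi'}x_n}$ (particular). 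Imposing $\ft{\uvel}(k,\xi',0)=\ft{\rhsh}(k,\xi')$ together with the divergence-free condition determines the remaining constants uniquely, and one is led to
\begin{align*}
&\ft{\uvel}_j(k,\xi',x_n)=\ft{\rhsh}_j(k,\xi')\,\e^{-\mu x_n}+F_j(k,\xi')\,\bp{\e^{-\snorm{\xi'}x_n}-\e^{-\mu x_n}},\\
&P(k,\xi')=\snorm{\xi'}^{-1}\bp{\mu\,\ft{\rhsh}_n-i\xi'\cdot\ft{\rhsh}'}\bp{\mu+\snorm{\xi'}},
\end{align*}
where $F_j$ is linear in $P$ (proportional to $\xi_jP/k$ for $j<n$, and to $\snorm{\xi'}P/k$ for $j=n$). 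Taking inverse transforms defines $(\uvel,\upres)$, which solves \eqref{PurelyOscProblem_HomoBrdData_Eq}; membership in the class \eqref{PurelyOscProblem_HomoBrdData_SolReg} will follow from the estimate below, proved first for $\rhsh$ in a dense subclass and then extended by density.

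\emph{Step 2 (reduction of the estimate to multipliers on the boundary).} Since $\uvel$ is purely oscillatory, Poincar\'e's inequality on the torus gives $\norm{\uvel}_{\LR{q}(\torus\times\hs)}\lesssim\norm{\pt\uvel}_{\LR{q}(\torus\times\hs)}$, and the interpolation inequality on $\hs$ controls $\grad\uvel$ by $\uvel$ and $\grad^2\uvel$ in $\LR{q}(\torus\times\hs)$; hence \eqref{PurelyOscProblem_HomoBrdData_Est} reduces to bounding $\norm{\pt\uvel}_{\LR{q}(\torus\times\hs)}$, $\norm{\grad^2\uvel}_{\LR{q}(\torus\times\hs)}$ and $\norm{\grad\upres}_{\LR{q}(\torus\times\hs)}$. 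By Step~1 each of these is the inverse $\FT_{\torus\times\ws}$-transform of a symbol built from $\ft{\rhsh}$, the functions $\parnorm{k}{\xi'}$, $\snorm{\xi'}$, $\mu$, and the three $x_n$-profiles $\e^{-\mu x_n}$, $\e^{-\snorm{\xi'}x_n}$ and --- crucially --- the difference $\e^{-\snorm{\xi'}x_n}-\e^{-\mu x_n}$. Integrating out $x_n$ via $\norm{\e^{-\mu\,\cdot}}_{\LR{q}(0,\infty)}\sim\parnorm{k}{\xi'}^{-\frac1q}$ and $\norm{\e^{-\snorm{\xi'}\,\cdot}}_{\LR{q}(0,\infty)}\sim\snorm{\xi'}^{-\frac1q}$, together with the smoothing estimate for $\norm{\e^{-\snorm{\xi'}\,\cdot}-\e^{-\mu\,\cdot}}_{\LR{q}(0,\infty)}$ stemming from $\snorm{\mu-\snorm{\xi'}}=\snorm{k}\,\snorm{\mu+\snorm{\xi'}}^{-1}$, one finds that every resulting symbol $M(k,\xi')$ acting on $\ft{\rhsh}$ is, up to a parabolically $0$-homogeneous factor, dominated by $\parnorm{k}{\xi'}^{\,2-\frac1q}$ --- with the sole exception of the pressure contribution of the normal boundary component, which produces $\snorm{\xi'}^{-\frac1q}\parnorm{k}{\xi'}^{2}\,\ft{\rhsh}_n\lesssim\bp{\snorm{k}\,\snorm{\xi'}^{-\frac1q}+\snorm{\xi'}^{\,2-\frac1q}}\snorm{\ft{\rhsh}_n}$. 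The first summand here carries exactly the homogeneity of $\WSR{1}{q}(\torus;\WSRD{-\frac1q}{q}(\ws))$, which is precisely why the extra hypothesis $\rhshn\in\projcompl\WSR{1}{q}(\torus;\WSRD{-\frac1q}{q}(\ws))$ in \eqref{PurelyOscProblem_HomoBrdData_DataReg} is needed.

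\emph{Step 3 (multiplier estimates and conclusion).} It remains to bound, in $\LR{q}(\torus\times\ws)$, the operators $\rhsh\mapsto\iFT_{\torus\times\ws}[M\,\ft{\rhsh}]$ for the finitely many symbols $M$ from Step~2, in terms of the data norms in \eqref{PurelyOscProblem_HomoBrdData_DataReg}. By Lemma~\ref{BS_InterpolationLem} (with $m=1$, and $n$ replaced by $n-1$ throughout) together with the realisation of the Sobolev--Slobodecki\u{\i} spaces as real interpolation spaces, the tangential data space in \eqref{PurelyOscProblem_HomoBrdData_DataReg} coincides, with equivalent norm, with the anisotropic Besov space $\BSRcompl{2-\frac1q}{qq}(\torus\times\ws)$. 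Arguing as in the proof of Lemma~\ref{BS_InterpolationLem}, each solution multiplier commutes with the Littlewood--Paley projections $\iFT_{\torus\times\ws}[\pu(2^{-2l}k,2^{-l}\xi')\,\FT_{\torus\times\ws}[\,\cdot\,]]$ built from the partition of unity $\pu$ of Lemma~\ref{BS_PartitionOfUnity}, and acts on the $l$-th block as $2^{l(2-\frac1q)}$ times a multiplier whose $\LR{q}(\torus\times\ws)$-operator norm is bounded uniformly in $l$; summing in $\ell^q$ yields the bound by $\norm{\rhsh}_{\BSRcompl{2-\frac1q}{qq}(\torus\times\ws)}$. The uniform bounds come from the Transference Principle: the relevant parabolically $0$-homogeneous symbols are singular only on $\set{k=0}\cup\set{\xi'=0}$, so they may be modified smoothly near $\set{k=0}$ without altering their action on purely oscillatory functions; the modified symbols satisfy the Marcinkiewicz condition on $\R\times\ws$ (checked from the parabolic homogeneity of $\mu$ and $\parnorm{k}{\xi'}$ and from $\snorm{\xi'}\leq\parnorm{k}{\xi'}$), and Theorem~\ref{transference} transfers the multiplier property to $\LR{q}(\torus\times\ws)$. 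The single remaining term $\snorm{k}\,\snorm{\xi'}^{-\frac1q}\,\ft{\rhsh}_n$ is handled the same way, now against $\norm{\rhshn}_{\WSR{1}{q}(\torus;\WSRD{-\frac1q}{q}(\ws))}$. Combining the estimates of Steps~2 and~3 gives \eqref{PurelyOscProblem_HomoBrdData_Est}. The main obstacle is this multiplier analysis: the solution couples the parabolic scaling $\realpart\mu\sim\parnorm{k}{\xi'}$ of $\e^{-\mu x_n}$ with the elliptic scaling $\snorm{\xi'}$ of the pressure profile $\e^{-\snorm{\xi'}x_n}$, so that the tangential estimates close only after exploiting the cancellation in $\e^{-\snorm{\xi'}x_n}-\e^{-\mu x_n}$, while the normal component forces the additional hypothesis on $\rhshn$; the smooth cut-off near $\set{k=0}$ that makes the homogeneous symbols admissible for Marcinkiewicz's theorem and the Transference Principle is a further technical point specific to the time-periodic setting.
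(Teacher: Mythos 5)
Your Step~1 matches the paper's reduction to the ODE \eqref{ODE}, and your formula for $P$ is correct: expanding $\snorm{\xi'}^{-1}\bp{\mu\,\ft{\rhsh}_n-i\xi'\cdot\ft{\rhsh}'}\bp{\mu+\snorm{\xi'}}$ with $\mu^2=ik+\snorm{\xi'}^2$ reproduces the paper's expression \eqref{PurelyOscProblem_HomoBrdData_SolFormulaq0} for $\qdf$. The overall strategy (Transference Principle, anisotropic Besov spaces via Lemma~\ref{BS_InterpolationLem}) is also the paper's. However, Steps~2--3 contain a genuine gap in the way the $\LR{q}$ estimate is closed.

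The ``summing in $\ell^q$'' at the end of Step~3 is not justified for general $q\in(1,\infty)$: the $\LR{q}(\torus\times\hs)$-norm of a sum of pieces that are frequency-localized in $(k,\xi')$ is not controlled by the $\ell^q$-sum of their $\LR{q}$-norms. Minkowski's inequality gives only the $\ell^1$-sum, and vector-valued Littlewood--Paley theory yields the $\ell^q$-bound only when $q\leq 2$. Moreover, the per-block estimate you assert is itself problematic for the pressure: the profile $\e^{-\snorm{\xi'}x_n}$ decays at the \emph{elliptic} rate $\snorm{\xi'}^{-1}$, not the parabolic rate $\parnorm{k}{\xi'}^{-1}$, so ``integrating out $x_n$'' costs $\snorm{\xi'}^{-1/q}$, which is not uniformly comparable to $2^{-l/q}$ across the anisotropic $l$-th block, where $\snorm{\xi'}$ ranges over all of $(0,2^{l}]$. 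The paper avoids both problems with a two-endpoint real interpolation argument: the operators $\qoprgood,\qoprbad$ are shown to be bounded from $\ABPSRcompl{2}{q}\np{\torus\times\ws}$ into $\LR{\infty}\bp{\R_+;\LR{q}(\torus\times\ws)}$ and from $\ABPSRcompl{1}{q}\np{\torus\times\ws}$ into $\LR{1,\infty}\bp{\R_+;\LR{q}(\torus\times\ws)}$; interpolating simultaneously in both variables, using $\bp{\LR{1,\infty},\LR{\infty}}_{1-\frac1q,q}=\LR{q}$ in $x_n$ together with Lemma~\ref{BS_InterpolationLem} for the boundary data, produces the $\LR{q}(\torus\times\hs)$-bound against $\BSRcompl{2-\frac1q}{qq}$. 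This cleanly absorbs the elliptic $x_n$-decay without any $\ell^q$-summing of Littlewood--Paley blocks.

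A second, structural difference: the paper performs this delicate analysis only for $\grad\upres$. Once the pressure estimate \eqref{PurelyOscProblem_HomoBrdData_EstimateForGradPresure} is in hand, $\uvel$ solves the time-periodic heat equation in $\torus\times\hs$ with right-hand side $-\grad\upres$ and boundary data $\rhsh$, and the half-space heat-equation maximal-regularity theorem of Kyed and Sauer \cite{KyedSauer_Heat} yields the velocity estimate at once. Your proposal instead tries to estimate $\pt\uvel$ and $\grad^2\uvel$ directly by the same multiplier machinery; besides inheriting the gap above, this misses the available shortcut and considerably increases the amount of work required.
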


\begin{proof}
We shall employ the Fourier transform $\FT_{\torus\times\ws}$ to transform \eqref{PurelyOscProblem_HomoBrdData_Eq} into a system of ODEs. 
For this purpose, we denote by $(k,\xi)\in \Zgrp\times\ws$ the coordinates in the dual group of $\torus\times\ws$. 
Letting 
$\uvelp\ceq\uvel^\prime \coloneqq (\uvel_1,\ldots,\uvel_{n-1})$, $\uveln\coloneqq\uvel_n$, 
$\ft{\uvelp}\ceq\FT_{\torus\times\ws}\bb{\uvelp}$, and $\ft{\uveln}\coloneqq\FT_{\torus\times\ws}\bb{\uvel_n}$ in \eqref{PurelyOscProblem_HomoBrdData_Eq}, we obtain an equivalent formulation of the system as a family of ODEs. More precisely, \eqref{PurelyOscProblem_HomoBrdData_Eq} is equivalent
to the following ODE being satisfied for each (fixed) $(k,\xi)\in\Zgrp\times\ws$:
\begin{align}\label{ODE}
\begin{pdeq}
ik\ft{\uvelp}(x_n) + \snorm{\xi}^2\ft{\uvelp}(x_n) - \partial_{x_n}^2\ft{\uvelp}(x_n) + i\xi\ft{\upres}(x_n) &= 0 && \tin\R_+, \\
ik\ft{\uveln}(x_n) + \snorm{\xi}^2\ft{\uveln}(x_n) - \partial_{x_n}^2\ft{\uveln}(x_n) + \partial_{x_n}\ft{\upres}(x_n) &= 0 && \tin\R_+, \\
i\xi\cdot\ft{\uvelp}(x_n) + \partial_{x_n}\ft{\uveln}(x_n) &= 0 && \tin\R_+, \\
(\ft{\uvelp}(0), \ft{\uveln}(0)) &= (\ft{\rhshp}, \ft{\rhshn}). 
\end{pdeq}
\end{align}
To solve the ODE, we first consider the case $k\neq 0$.
Taking divergence on both sides in $\eqrefsub{PurelyOscProblem_HomoBrdData_Eq}{1}$ and utilizing that $\Div\uvel=0$, we find that 
$\Delta\upres = 0$ and thus $-\snorm{\xi}^2\ft{\upres}(x_n) + \partial_{x_n}^2\ft{\upres}(x_n) = 0$ in $\R_+$. Consequently, 
\begin{align}\label{PurelyOscProblem_tfupresFormula}
\ft{\upres}(x_n) = \qdf(k,\xi) e^{-\snorm{\xi}x_n}\quad\tin\R_+
\end{align}
for some function $\qdf:\Zgrp\times\ws\ra\C$. 
Inserting \eqref{PurelyOscProblem_tfupresFormula} into \eqref{ODE}, we find that
\begin{align*}
\partial_{x_n}^2\ft{\uvelp} &= (ik + \snorm{\xi}^2)\ft{\uvelp} + i\xi \qdf e^{-\snorm{\xi}x_n}, \\
\partial_{x_n}^2\ft{\uveln} &= (ik + \snorm{\xi}^2)\ft{\uveln} - \snorm{\xi} \qdf e^{-\snorm{\xi}x_n}.
\end{align*}
Since $k\neq 0$, the resolution hereof yields 
\begin{align}
\ft{\uvelp}(x_n) &= -\frac{\xi\qdf(k, \xi)}{k} e^{-\snorm{\xi}x_n} + \alpha(k, \xi) e^{-\sqrt{\snorm{\xi}^2 + ik} \, x_n}, \\
\ft{\uveln}(x_n) &= \frac{\snorm{\xi}\qdf(k, \xi)}{ik} e^{-\snorm{\xi}x_n} + \beta(k, \xi) e^{-\sqrt{\snorm{\xi}^2 + ik} \, x_n},
\end{align}
for some functions $\alpha,\beta:\Zgrp\times\ws\ra\C$. 
Utilizing 
$\eqrefsub{ODE}{3}$ and the boundary conditions $\eqrefsub{ODE}{5}$, we deduce
\begin{align*}
&\alpha = \ft{\rhshp} + \frac{\xi\qdf}{k}, \qquad \beta = \ft{\rhshn} - \frac{\snorm{\xi}\qdf}{ik}
\end{align*}
and
\begin{align}\label{PurelyOscProblem_HomoBrdData_SolFormulaq0}
&\qdf = -i\left(\snorm{\xi} + \sqrt{\snorm{\xi}^2 + ik}\right)\frac{\xi}{\snorm{\xi}}\cdot\ft{\rhshp} + \sqrt{\snorm{\xi}^2+ik} \, \ft{\rhshn} + 
\snorm{\xi}\ft{\rhshn} + \frac{ik}{\snorm{\xi}}\ft{\rhshn}.
\end{align}
By \eqref{PurelyOscProblem_tfupresFormula}--\eqref{PurelyOscProblem_HomoBrdData_SolFormulaq0} a solution to \eqref{ODE} is identified in the case
$k\neq 0$. Since $\rhsh$ is purely oscillatory, we have $\ft{\rhshp}(0,\xi)=\ft{\rhshn}(0,\xi)=0$, whence $(\uvelp,\uveln,\upres)\coloneqq(0,0,0)$ solves \eqref{ODE} in the case $k=0$.
We thus obtain a formula for the solution to \eqref{PurelyOscProblem_HomoBrdData_Eq}:  
\begin{align}\label{PurelyOscProblem_HomoBrdData_SolFormula}
\begin{aligned}
\uvelp &= \iFT_{\torus\times\ws}\Bb{-\frac{\xi\qdf}{k} e^{-\snorm{\xi}x_n} + \bp{\ft{\rhshp} + \frac{\xi\qdf}{k}} \e^{-\sqrt{\snorm{\xi}^2 + ik} \, x_n}}, \\
\uveln &= \iFT_{\torus\times\ws}\Bb{\frac{\snorm{\xi}\qdf}{ik} e^{-\snorm{\xi}x_n} + \bp{\ft{\rhshn} - \frac{\snorm{\xi}\qdf}{ik}} 
e^{-\sqrt{\snorm{\xi}^2 + ik} \, x_n}}, \\
\upres &= \iFT_{\torus\times\ws}\bb{\qdf e^{-\snorm{\xi}x_n}}.
\end{aligned}
\end{align}
Formally at least, $(\uvelp,\uveln,\upres)$ as defined above is a solution to \eqref{PurelyOscProblem_HomoBrdData_Eq}. It remains to show that 
this solution is well-defined in the class \eqref{PurelyOscProblem_HomoBrdData_SolReg} for data in the class
\eqref{PurelyOscProblem_HomoBrdData_DataReg}. We start by considering data $\rhsh\in\projcompl\SRh\np{\torus\times\ws}^n$. The space
$\SRh\np{\torus\times\ws}$ is dense in 
\begin{align*}
\WSR{1-\frac{1}{2q}}{q}\bp{\torus;\LR{q}(\ws)}\cap\LR{q}\bp{\torus;\WSR{2-\frac{1}{q}}{q}(\ws)} \cap \WSR{1}{q}\bp{\torus; \WSRD{-\frac{1}{q}}{q}(\ws)},
\end{align*}
which is not a trivial assertion since it entails the construction of an approximating sequence that converges simultaneously in Sobolev spaces of positive order and in homogeneous Sobolev spaces of negative order. Nevertheless, it can be shown by a standard ``cut-off'' and mollifier technique;
see \cite[proof of Theorem 2.3.3 and Theorem 5.1.5]{TriebelTheoryFunctionSpaces}. 
Consequently, $\projcompl\SRh\np{\torus\times\ws}^n$ is dense in the class \eqref{PurelyOscProblem_HomoBrdData_DataReg}.
Clearly, for purely oscillatory data $\rhsh$ the solution 
given by \eqref{PurelyOscProblem_HomoBrdData_SolFormula} is also purely oscillatory.
If we can therefore show \eqref{PurelyOscProblem_HomoBrdData_Est} for arbitrary $\rhsh\in\projcompl\SRh\np{\torus\times\ws}^n$, the claim of the proposition will follow by a density argument. 

We first examine the pressure term $\upres$ (more specifically $\grad\upres$). The terms in \eqref{PurelyOscProblem_HomoBrdData_SolFormulaq0} have different 
order of regularity, so we decompose $\qdf=\qdfone+\qdftwo$ by 
\begin{align}\label{PurelyOscProblem_HomoBrdData_DefOfqdfs}
\begin{aligned}
&\qdfone(k,\xi) \ceq -i\left(\snorm{\xi} + \sqrt{\snorm{\xi}^2 + ik}\right)\frac{\xi}{\snorm{\xi}}\cdot\ft{\rhshp} + \sqrt{\snorm{\xi}^2+ik} \, \ft{\rhshn} + \snorm{\xi}\ft{\rhshn},\\
&\qdftwo(k,\xi) \ceq \frac{ik}{\snorm{\xi}}\ft{\rhshn},
\end{aligned}
\end{align}
and introduce the operators
\begin{align}\label{PurelyOscProblem_HomoBrdData_DefOfGoodOpr}
\begin{aligned}
&\qoprgood: \SRh\np{\torus\times\ws}^n \ra \SR\np{\torus\times\hs}^n, \\
&\qoprgood(\rhsh)\coloneqq \iFT_{\torus\times\ws}\Bb{\xi\qdfone(k,\xi)e^{-\snorm{\xi}x_n}}
\end{aligned}
\end{align}
and 
\begin{align}\label{PurelyOscProblem_HomoBrdData_DefOfBadOpr}
\begin{aligned}
&\qoprbad: \SRh\np{\torus\times\ws} \ra \SR\np{\torus\times\hs}^n, \\
&\qoprbad(\rhshn)\coloneqq \iFT_{\torus\times\ws}\Bb{\xi\qdftwo(k,\xi)e^{-\snorm{\xi}x_n}}.
\end{aligned}
\end{align}
For $m\in\N_0$, we observe for any $x_n>0$  that
the symbol $\xi\ra\np{\snorm{\xi}x_n}^m\e^{-\snorm{\xi}x_n}$ is an $\LR{q}\np{\ws}$-multiplier.
Specifically, one verifies that  
\begin{align*}
\sup_{x_n>0}\sup_{\varepsilon\in\{0, 1\}^{n-1}}\sup_{\xi\in\ws}\left|\xi_1^{\varepsilon_1}\cdots\xi_{n-1}^{\varepsilon_{n-1}}\partial_{\xi_1}^{\varepsilon_1}\cdots\partial_{\xi_{n-1}}^{\varepsilon_{n-1}}\bb{\np{\snorm{\xi}x_n}^m\e^{-\snorm{\xi}x_n}} \right|<\infty,
\end{align*}
whence it follows from the Marcinkiewicz Multiplier Theorem (see for example \cite[Corollary 6.2.5]{Grafakos}) that the Fourier-multiplier operator with  symbol 
$\xi\ra\np{\snorm{\xi}x_n}^m\e^{-\snorm{\xi}x_n}$ is a bounded operator on $\LR{q}\np\ws$ with operator norm independent on $x_n$, that is,
\begin{align}\label{PurelyOscProblem_HomoBrdData_MultiplierOprNormIndep}
\sup_{x_n>0}\,\normL{\phi\mapsto\FT_{\ws}\Bb{\np{\snorm{\xi}x_n}^m\e^{-\snorm{\xi}x_n}\FT_{\ws}\nb{\phi}}}_{\linearmaps\np{\LR{q}\np{\ws},\LR{q}\np{\ws}}}<\infty.
\end{align}
We can thus estimate 
\begin{align*}
&\norm{\qoprgood(\rhsh)}_{\LR{\infty}\np{\R_+;\LR{q}\np{\torus;\LR{q}(\ws)}}}
\leq \Ccn{C}\, \norm{\iFT_{\torus\times\ws}\bb{\xi\qdfone(k,\xi)}}_{\LR{q}\np{\torus;\LR{q}(\ws)}}\\
&\qquad\leq \Ccn{C}\, \Bp{\norm{\rhsh}_{\LR{q}\np{\torus;\HSR{2}{q}\np\ws}} + 
\normL{\iFT_{\torus\times\ws}\bb{\Mmultiplier(k,\xi)\,{\np{\snorm{\xi}^2+ik}\, \frac{\xi\otimes\xi}{\snorm{\xi}^2}\ft{\rhshp}}}}_{\LR{q}\np{\torus;\LR{q}(\ws)}}\\
&\qquad\qquad\ 
+\normL{\iFT_{\torus\times\ws}\bb{\Mmultiplier(k,\xi)\,{\np{\snorm{\xi}^2+ik}\, \frac{\xi}{\snorm{\xi}}\ft{\rhshn}}}}_{\LR{q}\np{\torus;\LR{q}(\ws)}}
},
\end{align*}
where
\begin{align*}
\Mmultiplier:\R\times\ws\ra\C,\quad
\Mmultiplier(\eta,\xi)\ceq \frac{\snorm{\xi}}{\sqrt{\snorm{\xi}^2+i\eta}}.
\end{align*}
Employing again the Marcinkiewicz Multiplier Theorem, we find that the symbol $\Mmultiplier$ is an $\LR{q}\np{\R;\LR{q}(\ws)}$-multiplier. An application of the Transference Principle (Theorem \ref{transference}) therefore implies that the restriction $\MmultiplierRestriction$ is an $\LR{q}\np{\torus;\LR{q}(\ws)}$-multiplier.
We thus conclude 
\begin{align}\label{PurelyOscProblem_HomoBrdData_GoodOprInterpolationEst1}
\begin{aligned}
&\norm{\qoprgood(\rhsh)}_{\LR{\infty}\np{\R_+;\LR{q}\np{\torus;\LR{q}(\ws)}}}
\leq \Ccn{C}\, {\norm{\rhsh}_{\LR{q}\np{\torus;\HSR{2}{q}\np\ws}\cap\HSR{1}{q}\np{\torus;\LR{q}\np{\ws}}}}.
\end{aligned}
\end{align}
This estimate shall serve as an interpolation endpoint. To obtain the opposite endpoint, we again employ \eqref{PurelyOscProblem_HomoBrdData_MultiplierOprNormIndep} to estimate
\begin{align*}
\sup_{x_n>0}\,\norm{x_n \qoprgood\np{\rhsh}}_{\LR{q}\np{\torus;\LR{q}(\ws)}}\leq \Ccn{C}\,\norm{\qdfone}_{\LR{q}\np{\torus;\LR{q}(\ws)}},
\end{align*}
which implies
\begin{align*}
\norm{\qoprgood\np{\rhsh}}_{\LR{1,\infty}\np{\R_+;\LR{q}\np{\torus;\LR{q}(\ws)}}}
&= \normL{\frac{1}{x_n} \norm{x_n \qoprgood\np{\rhsh}}_{\LR{q}\np{\torus;\LR{q}(\ws)}}}_{\LR{1,\infty}\np{\R_+}} \\
&\leq \Ccn{C}\,\norm{\qdfone}_{\LR{q}\np{\torus;\LR{q}(\ws)}}.
\end{align*}
Recalling \eqref{PurelyOscProblem_HomoBrdData_DefOfqdfs}, we estimate
\begin{align*}
&\norm{\qdfone}_{\LR{q}\np{\torus;\LR{q}(\ws)}}\\
&\qquad \leq \Ccn{C}\,\Bp{\norm{\rhsh}_{\LR{q}\np{\torus;\HSR{1}{q}\np\ws}}
+\normL{\iFT_{\torus\times\ws}\bb{\MmultiplierNr{1}(k,\xi)\cdot{\np{\snorm{\xi}+\snorm{k}^\half}\ft{\rhshp}}}}_{\LR{q}\np{\torus;\LR{q}(\ws)}}\\
&\qquad\qquad\quad +\normL{\iFT_{\torus\times\ws}\bb{\MmultiplierNr{2}(k,\xi)\,{\np{\snorm{\xi}+\snorm{k}^\half}\ft{\rhshn}}}}_{\LR{q}\np{\torus;\LR{q}(\ws)}}}
\end{align*}
with
\begin{align*}
&\MmultiplierNr{1}:\R\times\ws\ra\C^{n-1},\quad
\MmultiplierNr{1}(\eta,\xi)\ceq \frac{\sqrt{\snorm{\xi}^2+i\eta}}{\snorm{\xi}+\snorm{\eta}^{\frac{1}{2}}}\frac{\xi}{\snorm{\xi}},\\
&\MmultiplierNr{2}:\R\times\ws\ra\C,\quad
\MmultiplierNr{2}(\eta,\xi)\ceq \frac{\sqrt{\snorm{\xi}^2+i\eta}}{{\snorm{\xi}+\snorm{\eta}^{\frac{1}{2}}}}.
\end{align*}
Again, one can utilize the Marcinkiewicz Multiplier Theorem to show that both $\MmultiplierNr{1}$ and $\MmultiplierNr{2}$ are
$\LR{q}\np{\R;\LR{q}(\ws)}$-multipliers, and subsequently obtain from
the Transference Principle (Theorem \ref{transference}) that their restrictions to $\Zgrp\times\ws$ 
are $\LR{q}\np{\torus;\LR{q}(\ws)}$-multipliers. Consequently, we find that
\begin{align}\label{PurelyOscProblem_HomoBrdData_GoodOprInterpolationEst2}
&\norm{\qoprgood\np{\rhsh}}_{\LR{1,\infty}\np{\R_+;\LR{q}\np{\torus;\LR{q}(\ws)}}}
\leq \Ccn{C}\, {\norm{\rhsh}_{\LR{q}\np{\torus;\HSR{1}{q}\np\ws}\cap\HSR{\frac{1}{2}}{q}\np{\torus;\LR{q}\np{\ws}}}}.
\end{align}
By \eqref{PurelyOscProblem_HomoBrdData_GoodOprInterpolationEst1} and \eqref{PurelyOscProblem_HomoBrdData_GoodOprInterpolationEst2}, the
operator $\qoprgood$ extends uniquely to a bounded operator
\begin{align}\label{PurelyOscProblem_HomoBrdData_GoodOprInterpolationPoles1}
\begin{aligned}
&\qoprgood:\LR{q}\bp{\torus;\HSR{2}{q}\np\ws}^n\cap\HSR{1}{q}\bp{\torus;\LR{q}\np{\ws}}^n\ra\LR{\infty}\bp{\R_+;\LR{q}\np{\torus;\LR{q}(\ws)}}^n,\\
&\qoprgood:\LR{q}\bp{\torus;\HSR{1}{q}\np\ws}^n\cap\HSR{\frac{1}{2}}{q}\bp{\torus;\LR{q}\np{\ws}}^n\ra\LR{1,\infty}\bp{\R_+;\LR{q}\np{\torus;\LR{q}(\ws)}}^n. 
\end{aligned}
\end{align}
These extensions rely on the fact that $\SRh\np{\torus\times\ws}$ is dense in the function spaces on the left-hand side above.
We once more refer to
\cite[proof of Theorem 2.3.3 and Theorem 5.1.5]{TriebelTheoryFunctionSpaces} for a verification of this fact.
Using the projection $\projcompl$ on the left-hand side in \eqref{PurelyOscProblem_HomoBrdData_GoodOprInterpolationPoles1}, we obtain scales of the anisotropic Bessel-Potential spaces introduced in \eqref{BS_DefOfBesovSpace_DefnBessel}. Consequently, $\qoprgood$ is a bounded operator:
\begin{align}\label{PurelyOscProblem_HomoBrdData_GoodOprInterpolationPoles2}
\begin{aligned}
&\qoprgood:\ABPSRcompl{2}{q}\np\grpxx^n \ra\LR{\infty}\bp{\R_+;\LR{q}\np{\torus;\LR{q}(\ws)}}^n,\\
&\qoprgood:\ABPSRcompl{1}{q}\np\grpxx^n \ra\LR{1,\infty}\bp{\R_+;\LR{q}\np{\torus;\LR{q}(\ws)}}^n. 
\end{aligned}
\end{align}
Utilizing Lemma \ref{BS_InterpolationLem}, we find that
\begin{align*}
&\Bp{\ABPSRcompl{1}{q}\np\grpxx,\ABPSRcompl{2}{q}\np\grpxx}_{1-\frac{1}{q},q}=\BSRcompl{2-\frac{1}{q}}{qq}\np\grpxx\\
&\quad=\Bp{\projcompl\LR{q}\np{\torus;\LR{q}(\ws)},\ABPSRcompl{2}{q}\np\grpxx}_{1-\frac{1}{2q},q}\\
&\quad=\Bp{\projcompl\LR{q}\np{\torus;\LR{q}(\ws)}, \projcompl\LR{q}\bp{\torus;\HSR{2}{q}\np\ws}\cap\projcompl\HSR{1}{q}\bp{\torus;\LR{q}\np{\ws}}}_{1-\frac{1}{2q},q}\\
&\quad=\projcompl\Bp{\LR{q}\np{\torus;\LR{q}(\ws)}, \LR{q}\bp{\torus;\HSR{2}{q}\np\ws}}_{1-\frac{1}{2q},q}\\
&\qquad\qquad \cap \projcompl\Bp{\LR{q}\np{\torus;\LR{q}(\ws)},\HSR{1}{q}\bp{\torus;\LR{q}\np{\ws}}}_{1-\frac{1}{2q},q}\\
&\quad=\projcompl\WSR{1-\frac{1}{2q}}{q}\bp{\torus;\LR{q}(\ws)}\cap\projcompl\LR{q}\bp{\torus;\WSR{2-\frac{1}{q}}{q}(\ws)}.
\end{align*}
One can employ \cite[Theorem 1.12.1]{TriebelInterpolation} to verify the interpolation of the intersection space in the fourth equality above. 
Moreover, real interpolation yields
\begin{align*}
\Bp{ \LR{1, \infty}\bp{\R_+; \LR{q}\np{\torus;\LR{q}(\ws)}}, &\, \LR{\infty}\bp{\R_+; \LR{q}\np{\torus;\LR{q}(\ws)}}}_{1-\frac{1}{q}, q} \\
&\qquad\qquad\qquad\qquad\qquad = \LR{q}\bp{\R_+; \LR{q}\np{\torus;\LR{q}(\ws)}}.
\end{align*}
Recalling \eqref{PurelyOscProblem_HomoBrdData_GoodOprInterpolationPoles2}, we conclude that $\qoprgood$ extends uniquely to a bounded operator 
\begin{align}\label{PurelyOscProblem_HomoBrdData_GoodOprFinalMappingProperty}
\qoprgood: \projcompl\WSR{1-\frac{1}{2q}}{q}\bp{\torus;\LR{q}(\ws)}^n\cap\projcompl\LR{q}\bp{\torus;\WSR{2-\frac{1}{q}}{q}(\ws)}^n \ra
\LR{q}\bp{\torus;\LR{q}(\hs)}^n.
\end{align}
We now recall \eqref{PurelyOscProblem_HomoBrdData_DefOfBadOpr} and examine the operator $\qoprbad$. Utilizing \eqref{PurelyOscProblem_HomoBrdData_MultiplierOprNormIndep} with $m=0$, we obtain
\begin{align}\label{PurelyOscProblem_HomoBrdData_BadOprInterpolationPoles1}
\begin{aligned}
\norm{\qoprbad(\rhshn)}_{\LR{\infty}\np{\R_+;\LR{q}\np{\torus;\LR{q}(\ws)}}}
&\leq \Ccn{C}\, \norm{\iFT_{\torus\times\ws}\bb{\xi\qdftwo\np{k,\xi}}}_{\LR{q}\np{\torus;\LR{q}(\ws)}}\\
&\leq \Ccn{C}\, \norm{\rhshn}_{\HSR{1}{q}\np{\torus;\LR{q}\np{\ws}}}.
\end{aligned}
\end{align}
We again employ \eqref{PurelyOscProblem_HomoBrdData_MultiplierOprNormIndep} to estimate
\begin{align*}
\sup_{x_n>0}\,\norm{x_n \qoprbad\np{\rhshn}}_{\LR{q}\np{\torus; \LR{q}(\ws)}}\leq \Ccn{C}\,\norm{\rhshn}_{\HSR{1}{q}\np{\torus;\HSRD{-1}{q}\np{\ws}}},
\end{align*}
which implies
\begin{align*}
\norm{\qoprbad\np{\rhshn}}_{\LR{1,\infty}\np{\R_+;\LR{q}\np{\torus; \LR{q}(\ws)}}}
&= \normL{\frac{1}{x_n} \norm{x_n \qoprbad\np{\rhshn}}_{\LR{q}\np{\torus; \LR{q}(\ws)}}}_{\LR{1,\infty}\np{\R_+}}\\
&\leq \Ccn{C}\,\norm{\rhshn}_{\HSR{1}{q}\np{\torus;\HSRD{-1}{q}\np{\ws}}}.
\end{align*}
It follows that $\qoprbad$ extends to a bounded operator
\begin{align*}
&\qoprbad: \projcompl\HSR{1}{q}\bp{\torus;\LR{q}\np{\ws}} \ra\LR{\infty}\bp{\R_+;\LR{q}\np{\torus; \LR{q}(\ws)}}^n,\\
&\qoprbad: \projcompl\HSR{1}{q}\bp{\torus;\HSRD{-1}{q}\np{\ws}}\ra\LR{1,\infty}\bp{\R_+;\LR{q}\np{\torus; \LR{q}(\ws)}}.^n 
\end{align*}
Real interpolation thus implies that 
$\qoprbad$ extends to a bounded operator
\begin{align}\label{PurelyOscProblem_HomoBrdData_BadOprFinalMappingProperty}
\qoprbad: \projcompl\WSR{1}{q}\bp{\torus; \WSRD{-\frac{1}{q}}{q}(\ws)}\ra \LR{q}\bp{\torus; \LR{q}(\hs)}^n.
\end{align}
We now return to the solution formulas \eqref{PurelyOscProblem_HomoBrdData_SolFormula} and consider $\rhsh\in\projcompl\SRh\np{\torus\times\ws}^n$.
In this case, an application of \eqref{PurelyOscProblem_HomoBrdData_MultiplierOprNormIndep} ensures that $\upres$ is well-defined as an element
in the function space $\LR{q}\bp{\torus;\HSRD{1}{q}\np\hs}$.
By \eqref{PurelyOscProblem_HomoBrdData_GoodOprFinalMappingProperty} and \eqref{PurelyOscProblem_HomoBrdData_BadOprFinalMappingProperty},
we obtain
\begin{align}\label{PurelyOscProblem_HomoBrdData_EstimateForGradPresure}
\begin{aligned}
&\norm{\grad\upres}_{\LR{q}\np{\torus; \LR{q}(\hs)}} = \norm{\qoprgood\np{\rhsh}+\qoprbad\np{\rhshn}}_{\LR{q}\np{\torus; \LR{q}(\hs)}}\\
&\quad \leq \Ccn{C}
\bp{\norm{\rhsh}_{\WSR{1-\frac{1}{2q}}{q}\np{\torus;\LR{q}(\ws)}\cap\LR{q}\np{\torus;\WSR{2-\frac{1}{q}}{q}(\ws)}}
+\norm{\rhsh_n}_{\WSR{1}{q}(\torus; \WSRD{-\frac{1}{q}}{q}(\ws)}}
\end{aligned}
\end{align}
In a similar manner, it can be shown that $(\uvelp,\uveln)$ is well-defined as an element in the space $\LR{q}\bp{\torus;\HSR{2}{q}\np{\hs}}\cap\HSR{1}{q}\bp{\torus;\LR{q}\np{\hs}}$. To this end, one may consider the symbol
\begin{align*}
\mmultiplier:\R\times\ws\ra\C,\quad \mmultiplier\np{\eta,\xi}\ceq \bp{\sqrt{\snorm{\xi}^2+i\eta}\,x_n}^m \e^{-\sqrt{\snorm{\xi}^2 + i\eta} \, x_n} 
\end{align*}
and verify that 
\begin{align*}
\sup_{x_n>0}\sup_{\varepsilon\in\{0, 1\}^n}\sup_{(\eta,\xi)\in\R\times\ws}\left|\eta^{\varepsilon_0}\xi_1^{\varepsilon_1}\cdots\xi_{n-1}^{\varepsilon_{n-1}}\partial_{\eta}^{\varepsilon_0}\partial_{\xi_1}^{\varepsilon_1}\cdots\partial_{\xi_{n-1}}^{\varepsilon_{n-1}}
\mmultiplier(\eta,\xi) \right|<\infty.
\end{align*}
It follows that the Fourier-multiplier operator corresponding to the symbol $\mmultiplier$ is a bounded operator on $\LR{q}\np{\R; \LR{q}(\ws)}$ with operator norm independent on $x_n$. An application of the Transference Principle (Theorem \ref{transference}) therefore implies that the operator corresponding to the symbol $\mmultiplier_{|\Zgrp\times\ws}$ is a bounded operator on 
$\LR{q}\np{\torus; \LR{q}(\ws)}$ with operator norm independent on $x_n$, that is, 
\begin{align}\label{PurelyOscProblem_HomoBrdData_MultiplierOprNormIndep2}
\sup_{x_n>0}\,\normL{\phi\mapsto\FT_{\torus\times\ws}\bb{\mmultiplier(k,\xi)\FT_{\torus\times\ws}\nb{\phi}}}_{\linearmaps\np{\LR{q}\np{\torus; \LR{q}(\ws)},\LR{q}\np{\torus; \LR{q}(\ws)}}}<\infty.
\end{align}
With both \eqref{PurelyOscProblem_HomoBrdData_MultiplierOprNormIndep} and \eqref{PurelyOscProblem_HomoBrdData_MultiplierOprNormIndep2} at our disposal, it is now straightforward to verify that 
$\uvel\coloneqq(\uvelp,\uveln)$ is well-defined as element in the space $\LR{q}\bp{\torus;\HSR{2}{q}\np{\hs}}\cap\HSR{1}{q}\bp{\torus;\LR{q}\np{\hs}}$.
By construction, this choice of $(\uvel,\upres)$ is a solution to \eqref{PurelyOscProblem_HomoBrdData_Eq}.
Moreover, since $\projcompl\rhsh=\rhsh$, also $\projcompl\uvel=\uvel$. 
This means that $\uvel$ is a purely oscillatory solution in the aforementioned function space to the time-periodic heat equation in the half-space
\begin{align*}
\begin{pdeq}
\pt\uvel - \Delta\uvel &= -\grad\upres && \tin\torus\times\halfspace, \\
\uvel &= \rhsh && \ton\torus\times\partial\hs.
\end{pdeq}
\end{align*}
By \cite[Theorem 2.1]{KyedSauer_Heat} (see also \cite[Theorem 1.3]{KyedSauer_ADN1}), it is known that this problem has a unique purely oscillatory solution in the space
$\LR{q}\bp{\torus;\HSR{2}{q}\np{\hs}}\cap\HSR{1}{q}\bp{\torus;\LR{q}\np{\hs}}$, which satisfies
\begin{align*}
&\norm{\uvel}_{\HSR{1}{q}\np{\torus;\LR{q}(\hs)}\cap\LR{q}\np{\torus;\HSR{2}{q}(\hs)}}\\
&\qquad \leq \Ccn{C} \bp{\norm{\grad\upres}_{\LR{q}\np{\torus; \LR{q}(\hs)}} +
\norm{\rhsh}_{\WSR{1-\frac{1}{2q}}{q}\np{\torus;\LR{q}(\ws)}\cap\LR{q}\np{\torus;\WSR{2-\frac{1}{q}}{q}(\ws)}}}.
\end{align*}
Combining this estimate with \eqref{PurelyOscProblem_HomoBrdData_EstimateForGradPresure}, we conclude \eqref{PurelyOscProblem_HomoBrdData_Est}.
\end{proof}

In the next step, we consider the resolution of the fully non-homogeneous system \eqref{SHGP}, that is, \eqref{SH} with purely oscillatory data, and establish $\LR{q}$ estimates. This step concludes the main result of the article.

\begin{thm}\label{PurelyOscProblem_HomoDataThm}
Let $q\in(1,\infty)$ and $n\geq 2$. For all
\begin{align}
\begin{aligned}
&\f\in\projcompl\LR{q}\bp{\torus; \LR{q}(\hs)}^n,\\
&\g\in\projcompl\LR{q}\bp{\torus; \WSR{1}{q}(\halfspace)}\cap\projcompl\WSR{1}{q}\bp{\torus; \WSRD{-1}{q}(\halfspace)},\\
&\h\in\projcompl\WSR{1-\frac{1}{2q}}{q}\bp{\torus;\LR{q}(\ws)}^n\cap\projcompl\LR{q}\bp{\torus;\WSR{2-\frac{1}{q}}{q}(\ws)}^n 
\end{aligned}
\end{align}
with
\begin{align}
&\h_{n}\in\projcompl\WSR{1}{q}\bp{\torus; \WSRD{-\frac{1}{q}}{q}(\ws)}
\end{align}
there is a solution $(\uvel,\upres)$ to \eqref{SH} with 
\begin{align}
\begin{aligned}\label{PurelyOscProblem_HomoDataThm_SolReg}
&\uvel\in\projcompl\WSR{1}{q}\bp{\torus;\LR{q}(\hs)}^n\cap\projcompl\LR{q}\bp{\torus;\WSR{2}{q}(\hs)}^n,\\
&\upres\in\projcompl\LR{q}\bp{\torus;\WSRD{1}{q}(\hs)},
\end{aligned}
\end{align}
which satisfies
\begin{align}\label{PurelyOscProblem_HomoDataThm_ProjComplEst}
\begin{aligned}
&\norm{\uvel}_{\WSR{1}{q}\np{\torus;\LR{q}(\hs)}\cap\LR{q}\np{\torus;\WSR{2}{q}(\hs)}}
+ \norm{\grad\upres}_{\LR{q}\np{\torus; \LR{q}(\hs)}} \\
&\qquad \leq \Ccn{C}\, 
\bp{\norm{\f}_{\LR{q}\np{\torus; \LR{q}(\hs)}}+\norm{\g}_{\LR{q}\np{\torus; \WSR{1}{q}(\halfspace)}\cap\WSR{1}{q}\np{\torus; \WSRD{-1}{q}(\halfspace)}}\\
&\qquad\qquad +\norm{\h}_{\WSR{1-\frac{1}{2q}}{q}\np{\torus;\LR{q}(\ws)}\cap\LR{q}\np{\torus;\WSR{2-\frac{1}{q}}{q}(\ws)}}
+ \norm{\h_n}_{\WSR{1}{q}\np{\torus; \WSRD{-\frac{1}{q}}{q}(\ws)}}\,
}
\end{aligned}
\end{align}
with $\Ccn{C}=\Ccn{C}(n,q,\per)$. Moreover, if
$(\tuvel,\tupres)$ is another solution to \eqref{SH} in the class \eqref{PurelyOscProblem_HomoDataThm_SolReg}, then 
$\uvel=\tuvel$ and $\upres=\tupres + d(t)$
for some function $d$ that depends only on time.
\end{thm}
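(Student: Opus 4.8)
The plan is to reduce the theorem to Proposition~\ref{PurelyOscProblem_HomoBrdData} by first removing the interior data $\f$ and $\g$. First I would extend $\f$ from $\torus\times\hs$ to $\torus\times\Rn$ by zero and $\g$ by a universal (Stein-type) extension operator acting only in the spatial variable, bounded simultaneously on $\LR{q}\bp{\torus;\WSR{1}{q}\np{\hs}}$ and on $\WSR{1}{q}\bp{\torus;\WSRD{-1}{q}\np{\hs}}$ and commuting with $\projcompl$; denote the extended data, again purely oscillatory, by $\tf$ and $\tilde\g$. Since the zero time-frequency is absent (so that the Stokes symbol $ik+\snorm{\xi}^2$ stays bounded away from zero), the maximal regularity theory for the time-periodic Stokes system in the whole space \cite{KyedMaxReg14} applies in the \emph{inhomogeneous} Bessel-potential scale and produces a purely oscillatory pair $(\Vvel,\Vpres)$ with $\Vvel\in\projcompl\WSR{1}{q}\bp{\torus;\LR{q}\np{\Rn}}^n\cap\projcompl\LR{q}\bp{\torus;\WSR{2}{q}\np{\Rn}}^n$ and $\grad\Vpres\in\LR{q}\bp{\torus;\LR{q}\np{\Rn}}$ solving $\pt\Vvel-\Delta\Vvel+\grad\Vpres=\tf$, $\Div\Vvel=\tilde\g$ in $\torus\times\Rn$, together with the estimate $\norm{\Vvel}+\norm{\grad\Vpres}\leq\Ccn{C}\bp{\norm{\f}+\norm{\g}}$ in the respective norms. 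Restricting $(\Vvel,\Vpres)$ to $\torus\times\hs$ and writing $\uvel=\funcrestriction{\Vvel}{\torus\times\hs}+\wvel$, $\upres=\funcrestriction{\Vpres}{\torus\times\hs}+\wpres$, it then remains to solve $\pt\wvel-\Delta\wvel+\grad\wpres=0$, $\Div\wvel=0$ in $\torus\times\hs$, $\wvel=\rhsh$ on $\torus\times\partial\hs$, with $\rhsh\ceq\h-\funcrestriction{\Vvel}{\torus\times\partial\hs}$ --- precisely the setting of Proposition~\ref{PurelyOscProblem_HomoBrdData}, provided $\rhsh$ lies in the data class \eqref{PurelyOscProblem_HomoBrdData_DataReg}.

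Verifying this membership is the crux of the argument. Since $\h$ lies in \eqref{PurelyOscProblem_HomoBrdData_DataReg} by hypothesis, it suffices to treat the boundary trace $\funcrestriction{\Vvel}{\torus\times\partial\hs}$. The anisotropic trace theorem for $\WSR{1}{q}\bp{\torus;\LR{q}\np{\hs}}\cap\LR{q}\bp{\torus;\WSR{2}{q}\np{\hs}}$ (see \cite{KyedSauer_Heat,KyedSauer_ADN1}) gives $\funcrestriction{\Vvel}{\torus\times\partial\hs}\in\WSR{1-\frac{1}{2q}}{q}\bp{\torus;\LR{q}\np{\ws}}^n\cap\LR{q}\bp{\torus;\WSR{2-\frac{1}{q}}{q}\np{\ws}}^n$ with trace norm controlled by $\norm{\Vvel}$, hence by the data, and this trace is purely oscillatory. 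The only genuinely delicate point is the compatibility condition in \eqref{PurelyOscProblem_HomoBrdData_DataReg}, that is, $\funcrestriction{\np{\Vvel_n}}{\torus\times\partial\hs}\in\projcompl\WSR{1}{q}\bp{\torus;\WSRD{-\frac{1}{q}}{q}\np{\ws}}$, which is not covered by the generic trace theorem. Here I would differentiate $\Div\Vvel=\tilde\g$ in time, obtaining $\partial_{x_n}\pt\Vvel_n=\pt\tilde\g-\gradp\cdot\pt\Vvel'$, observe that the right-hand side lies in $\LR{q}\bp{\torus;\WSRD{-1}{q}\np{\hs}}$ whereas $\pt\Vvel_n\in\LR{q}\bp{\torus;\LR{q}\np{\hs}}$, and then apply a refined trace estimate of the form $\norm{\funcrestriction{v}{\ws}}_{\WSRD{-\frac{1}{q}}{q}\np{\ws}}\leq\Ccn{C}\bp{\norm{v}_{\LR{q}\np{\hs}}+\norm{\partial_{x_n}v}_{\WSRD{-1}{q}\np{\hs}}}$, proved by the same Fourier-analytic technique as the classical trace theorems (cf.\ \cite{KyedSauer_ADN1,TriebelTheoryFunctionSpaces}), to conclude $\funcrestriction{(\pt\Vvel_n)}{\torus\times\partial\hs}\in\LR{q}\bp{\torus;\WSRD{-\frac{1}{q}}{q}\np{\ws}}$ with a bound by the data. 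With $\rhsh$ now known to lie in \eqref{PurelyOscProblem_HomoBrdData_DataReg}, Proposition~\ref{PurelyOscProblem_HomoBrdData} furnishes $(\wvel,\wpres)$ in the class \eqref{PurelyOscProblem_HomoBrdData_SolReg} satisfying \eqref{PurelyOscProblem_HomoBrdData_Est}; combining that estimate with the whole-space estimate and the trace bounds yields a solution $(\uvel,\upres)$ in the class \eqref{PurelyOscProblem_HomoDataThm_SolReg} satisfying \eqref{PurelyOscProblem_HomoDataThm_ProjComplEst}.

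For uniqueness, let $(\tuvel,\tupres)$ be a second solution in the class \eqref{PurelyOscProblem_HomoDataThm_SolReg} and set $(\wvel,\wpres)\ceq(\uvel-\tuvel,\upres-\tupres)$, a purely oscillatory solution of \eqref{SH} with $\f=\g=\h=0$. Applying $\FT_{\torus\times\ws}$ turns this into the ODE system \eqref{ODE} with vanishing data, one family for each $(k,\xi)\in\Zgrp\times\ws$. For $k\neq0$, the membership of $\wvel$ in $\projcompl\LR{q}\bp{\torus;\WSR{2}{q}\np{\hs}}$ and of $\grad\wpres$ in $\LR{q}\bp{\torus;\LR{q}\np{\hs}}$ rules out the exponentially growing modes $\e^{\snorm{\xi}x_n}$ and $\e^{\sqrt{\snorm{\xi}^2+ik}\,x_n}$, so $\ft{\wpres}(x_n)=\qdf\,\e^{-\snorm{\xi}x_n}$ and $\ft{\wvel}$ is the corresponding particular solution plus a multiple of $\e^{-\sqrt{\snorm{\xi}^2+ik}\,x_n}$; the homogeneous boundary condition and the divergence relation in \eqref{ODE} then force $\qdf=0$ and all remaining coefficients to vanish, exactly as in the resolution of \eqref{ODE} in the proof of Proposition~\ref{PurelyOscProblem_HomoBrdData}. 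The mode $k=0$ vanishes since $(\wvel,\wpres)$ is purely oscillatory; in particular, and in contrast to the steady-state problem, none of the static fields $(a_1x_n,\ldots,a_{n-1}x_n,0)$ survives, as these are time-independent and hence have vanishing purely oscillatory part. Thus $\wvel=0$ and $\grad\wpres=0$, i.e.\ $\uvel=\tuvel$ and $\upres=\tupres+d(t)$ for a function $d$ depending only on time. The main obstacle throughout is the verification of the compatibility condition for $\funcrestriction{\np{\Vvel_n}}{\torus\times\partial\hs}$ in the second paragraph: it is the single step not supplied by off-the-shelf trace theorems, and it genuinely requires exploiting the divergence structure of the whole-space solution.
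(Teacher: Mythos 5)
Your plan — extend $\f$ and $\g$ to the whole space, solve a whole-space time-periodic Stokes system, restrict and reduce to Proposition~\ref{PurelyOscProblem_HomoBrdData} — follows the same overall skeleton as the paper, but you make a different choice at the whole-space stage, and that choice is precisely what leaves a gap. You correctly identify the crux: the compatibility condition $\rhsh_n\in\projcompl\WSR{1}{q}\bp{\torus;\WSRD{-1/q}{q}(\ws)}$ must be verified for the corrected boundary datum $\rhsh=\h-\funcrestriction{\Vvel}{\torus\times\partial\hs}$. However, your proposal to establish it hinges on the unproved ``refined trace estimate''
\[
\norm{v|_\ws}_{\WSRD{-1/q}{q}(\ws)}\leq \Ccn{C}\bp{\norm{v}_{\LR{q}(\hs)}+\norm{\partial_{x_n}v}_{\WSRD{-1}{q}(\hs)}},
\]
and this is not an ``off-the-shelf'' fact that merely requires a reference. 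Tracking it through a Littlewood--Paley decomposition in the tangential variable produces a bound of $\norm{P_j v(\cdot,0)}_{\LR{q}(\ws)}$ by $2^{j/q}\bp{\norm{P_j v}_{\LR{q}(\hs)}+\norm{P_j\partial_{x_n}v}_{\WSRD{-1}{q}(\hs)}}$, and summing in $\lR{q}$ yields a Besov-type norm $\bp{\sum_j\norm{P_j v}_{\LR{q}(\hs)}^q}^{1/q}$ on the right-hand side, not $\norm{v}_{\LR{q}(\hs)}$; these norms are not comparable in the favourable direction for $q<2$. Moreover, $\WSRD{-1/q}{q}(\ws)$ is a \emph{homogeneous} negative-order space, so low frequencies in $\xi'$ must be controlled; that information does not come for free from a generic Stein extension, nor from the inhomogeneous trace $\funcrestriction{\Vvel_n}{\torus\times\partial\hs}\in\LR{q}\bp{\torus;\WSR{2-1/q}{q}(\ws)}$. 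A related issue sits upstream: you need the extension operator for $\g$ to be simultaneously bounded on $\WSR{1}{q}\bp{\torus;\WSRD{-1}{q}(\hs)}$, which is again a homogeneous negative-order claim that a generic Stein extension does not automatically deliver, and you need a whole-space maximal-regularity statement with inhomogeneous divergence in exactly this anisotropic class, which is more than what \cite{KyedMaxReg14} states.

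The paper sidesteps all of this with a structural trick you did not find. Instead of a generic extension, first remove $\f$ by solving a heat equation in the half-space with homogeneous Dirichlet data; then extend $\g-\Div\vvel$ to $\Rn$ by \emph{even reflection} in $x_n$ and solve the whole-space problem with zero forcing and divergence $\rhsG$ via the explicit Fourier-multiplier formulas $\ft{\wvel}=-i\xi|\xi|^{-2}\ft{\rhsG}$, $\ft{\wpres}=(ik+|\xi|^2)|\xi|^{-2}\ft{\rhsG}$. Because $\rhsG$ is even in $x_n$ and the symbol for $\wvel_n$ is odd in $\xi_n$, the resulting $\wvel_n$ is odd in $x_n$, hence $\trace_{\torus\times\ws}\nb{\wvel_n}=0$, so $\rhsh_n=\h_n$ and the delicate compatibility condition holds \emph{trivially} — no refined trace estimate is needed at all. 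This is what your Stein-extension route loses. Your uniqueness argument via Fourier-side ODE analysis is a reasonable alternative to the paper's duality argument (and your observation that the purely oscillatory projection removes the steady-state null modes $\np{a_1x_n,\ldots,a_{n-1}x_n,0}$ is correct), but it would need more care to be fully rigorous for $\LR{q}$ solutions with $q\neq 2$; the duality argument is cleaner. In summary: the architecture is right, but the central step — membership of the corrected boundary trace in $\projcompl\WSR{1}{q}\bp{\torus;\WSRD{-1/q}{q}(\ws)}$ — is not established by your argument, and the even-reflection device is the idea you are missing.
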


\begin{proof}
Let  
$\vvel\in\projcompl\WSR{1}{q}\bp{\torus;\LR{q}(\hs)}^n\cap\projcompl\LR{q}\bp{\torus;\WSR{2}{q}(\hs)}^n$ be
the solution to the purely oscillatory time-periodic n-dimensional heat equation
\begin{align*}
\begin{pdeq}
\pt\vvel - \Delta\vvel &= \f && \tin\torus\times\halfspace, \\
\vvel &= 0 && \ton\torus\times\partial\hs.
\end{pdeq}
\end{align*}
The existence of such a solution $\vvel$ that satisfies 
\begin{align*}
\norm{\vvel}_{\WSR{1}{q}\np{\torus;\LR{q}(\hs)}\cap\LR{q}\np{\torus;\WSR{2}{q}(\hs)}} \leq \Ccn{C} \norm{\f}_{\LR{q}\np{\torus; \LR{q}(\hs)}}
\end{align*}
follows from \cite[Theorem 2.1]{KyedSauer_Heat}. Denote by $\rhsG$ the extension of $\g-\Div\vvel$ to $\grp$ by even reflection in
the $x_n$ variable. Then
$\rhsG\in\projcompl\LR{q}\bp{\torus;\WSR{1}{q}\np{\Rn}}$. Moreover, identifying $\WSRD{-1}{q}\np{\Rn}$ as the dual of $\WSRD{1}{q'}\np{\Rn}$ and recalling that
$\g\in\projcompl\WSR{1}{q}\bp{\torus; \WSRD{-1}{q}(\halfspace)}$,
one directly verifies that $\rhsG\in\projcompl\WSR{1}{q}\bp{\torus;\WSRD{-1}{q}\np{\Rn}}$ with
\begin{multline*}
\norm{\rhsG}_{\WSR{1}{q}\np{\torus;\WSRD{-1}{q}\np{\Rn}}\cap\LR{q}\np{\torus;\WSR{1}{q}\np{\Rn}}}\\
\leq \Ccn{C}\bp{\norm{g}_{\WSR{1}{q}\np{\torus;\WSRD{-1}{q}\np{\hs}}\cap\LR{q}\np{\torus;\WSR{1}{q}\np{\hs}}}+
\norm{\vvel}_{\WSR{1}{q}\np{\torus;\LR{q}(\hs)}\cap\LR{q}\np{\torus;\WSR{2}{q}(\hs)}}}.
\end{multline*}
A solution
to the purely oscillatory Stokes system
\begin{align}\label{PurelyOscProblem_HomoDataThm_StokesRnReduction}
\begin{pdeq}
\pt\wvel-\Delta\wvel + \grad\wpres &=0 && \tin\grp,\\
\Div\wvel &= G &&\tin\grp
\end{pdeq}
\end{align}
is obtained via the solution formulas
\begin{align*}
\wvel \coloneqq \iFT_{\grp}\Bb{\frac{-i\xi}{\snorm{\xi}^2}\,\FT_{\grp}\nb{\rhsG}},\quad
\wpres \coloneqq \iFT_{\grp}\Bb{\frac{ik+\snorm{\xi}^2}{\snorm{\xi}^2}\,\FT_{\grp}\nb{\rhsG}}.
\end{align*}
From these formulas, we immediately obtain the estimate
\begin{align*}
\norm{\wvel}_{\WSR{1}{q}\np{\torus;\LR{q}(\Rn)}\cap\LR{q}\np{\torus;\WSR{2}{q}(\Rn)}}
&+ \norm{\grad\wpres}_{\LR{q}\np{\torus; \LR{q}(\R^n)}} \\
&\qquad\qquad \leq \Ccn{C} \norm{\rhsG}_{\LR{q}\np{\torus;\WSR{1}{q}\np{\Rn}}\cap\WSR{1}{q}\np{\torus;\WSRD{-1}{q}\np{\Rn}}}.
\end{align*}
By the symmetry of $\rhsG$, the vector field $\twvel$ obtained by odd reflection with respect to $x_n$ of the n'th component of $\wvel$, that is,
\begin{align*}
\twvel\np{t,x',x_n}\coloneqq \bp{\wvel_1\np{t,x',x_n},\ldots,\wvel_{n-1}\np{t,x',x_n},-\wvel_n\np{t,x',-x_n}}, 
\end{align*}
is also a solution to \eqref{PurelyOscProblem_HomoDataThm_StokesRnReduction} corresponding to the same pressure term $\wpres$.
This means that $\wvel$ and $\twvel$ both solve the same time-periodic heat equation in the whole-space $\grp$. By
\cite[Theorem 2.1]{KyedSauer_Heat}, $\wvel=\twvel$. It follows that
$\trace_{\torus\times\ws}\nb{\wvel_n}=0$. Consequently, $\rhsh\coloneqq\h-\trace_{\torus\times\ws}\nb{\wvel}$ belongs
to the space \eqref{PurelyOscProblem_HomoBrdData_DataReg} (see for example \cite{KyedSauer_ADN1} for a rigorous definition of the trace operator in this setting).
Let $(\Uvel,\Upres)$  be the corresponding solution from Proposition \ref{PurelyOscProblem_HomoBrdData}. It follows that
$\np{\uvel,\upres}\coloneqq\np{\Uvel+\wvel+\vvel,\Upres+\wpres}$ is a solution
to \eqref{SH} in the class \eqref{PurelyOscProblem_HomoDataThm_SolReg} satisfying \eqref{PurelyOscProblem_HomoDataThm_ProjComplEst}.

It remains to show uniqueness, which follows from a standard duality argument. To this end, assume that $(\tuvel,\tupres)$ is a solution
in the class \eqref{PurelyOscProblem_HomoDataThm_SolReg} to the homogeneous Stokes problem
\begin{align*}
\begin{pdeq}
\pt\tuvel - \Delta\tuvel + \grad \tupres &= 0 && \tin\torus\times\halfspace, \\
\Div\tuvel &= 0 && \tin\torus\times\halfspace, \\
\tuvel &= 0 && \ton\torus\times\partial\halfspace.
\end{pdeq}
\end{align*}
Let $\phi\in\CRci\np{\torus\times\hs}^n$. With exactly the same arguments as above, one can establish existence of a solution
\begin{align*}
&\adjvel\in\projcompl\WSR{1}{q\prime}\bp{\torus;\LR{q\prime}(\hs)}^n\cap\projcompl\LR{q\prime}\bp{\torus;\WSR{2}{q\prime}(\hs)}^n,\\
&\adjpres\in\projcompl\LR{q\prime}\bp{\torus;\WSRD{1}{q\prime}(\hs)},
\end{align*}
to the adjoint Stokes problem
\begin{align*}
\begin{pdeq}
\pt\adjvel + \Delta\adjvel + \grad \adjpres &= \phi && \tin\torus\times\halfspace, \\
\Div\adjvel &= 0 && \tin\torus\times\halfspace, \\
\adjvel &= 0 && \ton\torus\times\partial\halfspace,
\end{pdeq}
\end{align*}
where $q\prime$ denotes the H\"older conjugate of $q$.
Integration by parts yields
\begin{align*}
\int_\torus\int_\hs \tuvel\cdot\phi\,\dx\dt =
\int_\torus\int_\hs \tuvel\cdot\bp{\pt\adjvel + \Delta\adjvel + \grad \adjpres}\,\dx\dt= 0.
\end{align*}
Since this identity holds for all $\phi\in\CRci\np{\torus\times\hs}^n$, it follows that $\tuvel=0$. In turn, we deduce $\grad\tupres=0$, whence
$\tupres\in\projcompl\LR{q}\np{\torus}$, that is, $\tupres$ depends only on time. 
\end{proof}

\begin{proof}[Proof of Theorem \ref{MainThm}]
Let $\f,\g,\h$ be vector fields in the class \eqref{MainThm_Data}, with $\h$ satisfying \eqref{MainThm_DataCompCond}. 
By \cite[Theorem IV.3.2]{Galdi}, the steady-state Stokes problem \eqref{SHGS}
admits a solution $\np{\vvel,\vpres}\in\WSRD{2}{q}\np{\hs}^n\times\WSRD{1}{q}\np{\hs}$ that satisfies 
\begin{multline*}
\norm{\grad^2\vvel}_{\LR{q}(\hs)} + \norm{\grad\vpres}_{\LR{q}(\hs)}\leq \Ccn{C}
\bp{\norm{\proj \f}_{\LR{q}(\hs)} + \norm{\proj\g}_{\WSR{1}{q}(\hs)}+\norm{\proj\h}_{\WSR{2-\frac{1}{q}}{q}(\ws)}}.
\end{multline*}
By Theorem \ref{PurelyOscProblem_HomoDataThm}, the purely oscillatory Stokes problem \eqref{SHGP} admits a solution 
$(\wvel,\wpres)$ in the class \eqref{PurelyOscProblem_HomoDataThm_SolReg} satisfying \eqref{PurelyOscProblem_HomoDataThm_ProjComplEst}.
Putting $(\uvel,\upres)\coloneqq(\vvel+\wvel,\vpres+\wpres)$, we obtain a solution to \eqref{SH} that satisfies \eqref{MainThm_ProjEst} and
\eqref{MainThm_ProjComplEst}. Finally, 
if $(\tuvel,\tupres)$ is another solution to \eqref{SH} in the class \eqref{MainThm_SolReg}, then 
$\projcompl\uvel=\projcompl\tuvel$ by Theorem \ref{PurelyOscProblem_HomoDataThm}, and $\proj\uvel=\proj\tuvel+(a_1x_n,\ldots,a_{n-1}x_{n},0)$ for some vector $a\in\R^{n-1}$ by \cite[Theorem IV.3.2]{Galdi}. It follows that $\grad\upres=\grad\tupres$, and thus $\upres=\tupres + d(t)$
for some function $d$ that depends only on time.
\end{proof}

%%%%%%%%%%%%%%%%%%%%%%%%%%%%%%%%%%%%%%%%%%%%%%%%%%%%%%%%%%%%%%
%%          Bibliography                                    %%
%%%%%%%%%%%%%%%%%%%%%%%%%%%%%%%%%%%%%%%%%%%%%%%%%%%%%%%%%%%%%%
\bibliographystyle{plainurl}

\end{document}